\newcommand\changed[1]{#1}
\newtheorem{theorem}{Theorem}[section]
\newtheorem{lemma}[theorem]{Lemma}
\newtheorem{coro}[theorem]{Corollary}
\newtheorem{prop}[theorem]{Proposition}
\theoremstyle{definition}
\newtheorem{defn}[theorem]{Definition}
\newtheorem{exam}[theorem]{Example}
\newtheorem{claim}[theorem]{Claim}
\newcommand{\nc}{\newcommand}
\nc{\tred}[1]{\textcolor{red}{#1}} \nc{\tblue}[1]{\textcolor{blue}{#1}} \nc{\tgreen}[1]{\textcolor{green}{#1}} \nc{\tpurple}[1]{\textcolor{purple}{#1}} \nc{\btred}[1]{\textcolor{red}{\bf #1}} \nc{\btblue}[1]{\textcolor{blue}{\bf #1}} \nc{\btgreen}[1]{\textcolor{green}{\bf #1}} \nc{\btpurple}[1]{\textcolor{purple}{\bf #1}}
\renewcommand{\Bbb}{\mathbb}
\newcommand{\delete}[1]{}
\newcommand{\mynotes}[1]{}
\newcommand\notes[1]{}
\nc{\mlabel}[1]{\label{#1}}  
\nc{\mcite}[1]{\cite{#1}}  
\nc{\mref}[1]{\ref{#1}}  
\nc{\mbibitem}[1]{\bibitem{#1}} 
\nc{\mlabel}[1]{\label{#1}  
{\hfill \hspace{1cm}{\bf{{\ }\hfill(#1)}}}}
\nc{\mcite}[1]{\cite{#1}{{\bf{{\ }(#1)}}}}  
\nc{\mref}[1]{\ref{#1}{{\bf{{\ }(#1)}}}}  
\nc{\mbibitem}[1]{\bibitem[\bf #1]{#1}} 
\renewcommand\geq{\geqslant}
\renewcommand\leq{\leqslant}
\renewcommand\bar[1]{\overline{#1}}
\renewcommand\tilde[1]{\widetilde{#1}}
\nc{\vs}{\vec{\star}\xspace}
\nc{\occ}{occurrence\xspace}
\nc{\occs}{occurrences\xspace}
\nc{\Occ}{Occurrence\xspace}
\nc{\pla}{placement\xspace}
\nc{\plas}{placements\xspace}
\nc{\Pla}{Placement\xspace}
\nc{\Plas}{Placements\xspace}
\nc{\loc}{location\xspace}
\nc{\locs}{locations\xspace}
\nc{\phs}{\phi_{\star}}
\nc{\phsi}{\phi_{\star}^{-1}}
\nc{\pts}{\phi_{\star_1,\star_2}}
\nc{\ptsi}{\phi_{\star_1,\star_2}^{-1}}
\nc{\lm}{\,\llfloor}
\nc{\rtm}{\rrfloor\,}
\nc{\mo}{\mathbf o}
\nc{\pl}{\mathfrak{p}}
\nc{\rbw}{\mathfrak{R}} \nc{\brp}{\mathrm{brp}} \nc{\lead}{\mathrm{Lead}} \nc{\Id}{\mathrm{Id}} \nc{\Irr}{\mathrm{Irr}} \nc{\vx}{\sigma} \nc{\vy}{\tau} \nc{\dvx}{\sigma^{(1)}} \nc{\dvy}{\tau^{(1)}} \nc{\done}{\vep} \nc{\citep}[1]{\cite{#1}} \nc{\wt}{\mathrm{wt}} \nc{\bre}[1]{|#1|} \nc{\mapmonoid}{\frakM} \nc{\disjoint}{\frakM'}
\nc{\ncpoly}[1]{\langle #1\rangle}  
\nc{\mapm}[1]{\lfloor\!|{#1}|\!\rfloor}
\nc{\diff}[1]{{}^\NC\{ #1 \}} \nc{\disj}[1]{\{{#1}\}'} \nc{\mdisj}[1]{\frakM'(#1)} \nc{\brho}{\bar{\rho}} \nc{\om}{\bar{\frakm}} \nc{\frakn}{\mathfrak n} \nc{\ddeg}[1]{^{(#1)}} \nc{\opset}{X} \nc{\genset}{{Z}} \nc{\NC}{\mathrm{{NC}}} \nc{\leaf}{\mathrm{leaf}} \nc{\twig}{\mathrm{twig}} \nc{\fe}{\mathrm{fl}} \nc{\munderline}[1]{#1} \nc{\bo}{o} \nc{\dep}{\mathrm{depth}} \nc{\ofe}{\mathrm{ofl}} \nc{\dfe}{\mathrm{dfe}} \nc{\fex}{\mathrm{fex}} \nc{\dl}{\mathrm{dlex}} \nc{\db}{\mathrm{db}} \nc{\lex}{\mathrm{lex}} \nc{\clex}{\mathrm{clex}} \nc{\dgp}{\mathrm{dgp}} \nc{\dgx}{\mathrm{dgx}} \nc{\br}{\mathrm{br}} \nc{\obd}{\mathrm{odb}} \nc{\ob}{\mathrm{ob}}
\nc{\bin}[2]{ (_{\stackrel{\scs{#1}}{\scs{#2}}})}  
\nc{\binc}[2]{ \left (\!\! \begin{array}{c} \scs{#1}\\
    \scs{#2} \end{array}\!\! \right )}  
\nc{\bincc}[2]{  \left ( {\scs{#1} \atop
    \vspace{-1cm}\scs{#2}} \right )}  
\nc{\bs}{\bar{S}} \nc{\cosum}{\sqsubset} \nc{\la}{\longrightarrow} \nc{\rar}{\rightarrow} \nc{\dar}{\downarrow} \nc{\dprod}{**} \nc{\dap}[1]{\downarrow \rlap{$\scriptstyle{#1}$}} \nc{\md}{\mathrm{dth}} \nc{\uap}[1]{\uparrow \rlap{$\scriptstyle{#1}$}} \nc{\defeq}{\stackrel{\rm def}{=}} \nc{\disp}[1]{\displaystyle{#1}} \nc{\dotcup}{\ \displaystyle{\bigcup^\bullet}\ } \nc{\gzeta}{\bar{\zeta}} \nc{\hcm}{\ \hat{,}\ } \nc{\hts}{\hat{\otimes}} \nc{\barot}{{\otimes}} \nc{\free}[1]{\bar{#1}} \nc{\uni}[1]{\tilde{#1}} \nc{\hcirc}{\hat{\circ}} \nc{\leng}{\ell} \nc{\lleft}{[} \nc{\lright}{]} \nc{\lc}{\lfloor} \nc{\rc}{\rfloor}
\nc{\lb}{[} 
\nc{\rb}{]} 
\nc{\curlyl}{\left \{ \begin{array}{c} {} \\ {} \end{array}
    \right.  \!\!\!\!\!\!\!}
\nc{\curlyr}{ \!\!\!\!\!\!\!
    \left. \begin{array}{c} {} \\ {} \end{array}
    \right \} }
\nc{\longmid}{\left | \begin{array}{c} {} \\ {} \end{array}
    \right. \!\!\!\!\!\!\!}
\nc{\onetree}{\bullet} \nc{\ora}[1]{\stackrel{#1}{\rar}}
\nc{\ola}[1]{\stackrel{#1}{\la}}
\nc{\ot}{\otimes} \nc{\mot}{{{\boxtimes\,}}} \nc{\otm}{\overline{\boxtimes}} \nc{\sprod}{\bullet} \nc{\scs}[1]{\scriptstyle{#1}} \nc{\mrm}[1]{{\rm #1}} \nc{\msum}{\sum\limits}
\nc{\margin}[1]{\marginpar{\rm #1}}   
\nc{\dirlim}{\displaystyle{\lim_{\longrightarrow}}\,} \nc{\invlim}{\displaystyle{\lim_{\longleftarrow}}\,} \nc{\mvp}{\vspace{0.3cm}} \nc{\tk}{^{(k)}} \nc{\tp}{^\prime} \nc{\ttp}{^{\prime\prime}} \nc{\svp}{\vspace{2cm}} \nc{\vp}{\vspace{8cm}} \nc{\proofbegin}{\noindent{\bf Proof: }}
\nc{\proofend}{$\blacksquare$ \vspace{0.3cm}}
\nc{\modg}[1]{\!<\!\!{#1}\!\!>}
\nc{\intg}[1]{F_C(#1)} \nc{\lmodg}{\!<\!\!} \nc{\rmodg}{\!\!>\!} \nc{\cpi}{\widehat{\Pi}}
\nc{\sha}{{\mbox{\cyr X}}}  
\nc{\shap}{{\mbox{\cyrs X}}} 
\nc{\shpr}{\diamond}    
\nc{\shp}{\ast} \nc{\shplus}{\shpr^+}
\nc{\shprc}{\shpr_c}    
\nc{\msh}{\ast} \nc{\zprod}{m_0} \nc{\oprod}{m_1} \nc{\vep}{\varepsilon} \nc{\labs}{\mid\!} \nc{\rabs}{\!\mid}
\nc{\dth}{d} \nc{\mmbox}[1]{\mbox{\ #1\ }} \nc{\fp}{\mrm{FP}} \nc{\rchar}{\mrm{char}} \nc{\Fil}{\mrm{Fil}} \nc{\Mor}{Mor\xspace} \nc{\gmzvs}{gMZV\xspace} \nc{\gmzv}{gMZV\xspace} \nc{\mzv}{MZV\xspace} \nc{\mzvs}{MZVs\xspace} \nc{\Hom}{\mrm{Hom}} \nc{\id}{\mrm{id}} \nc{\im}{\mrm{im}} \nc{\incl}{\mrm{incl}} \nc{\map}{\mrm{Map}} \nc{\mchar}{\rm char} \nc{\nz}{\rm NZ} \nc{\supp}{\mathrm Supp}
\nc{\Alg}{\mathbf{Alg}} \nc{\Bax}{\mathbf{Bax}} \nc{\bff}{\mathbf f} \nc{\bfk}{{\bf k}} \nc{\bfone}{{\bf 1}} \nc{\bfx}{\mathbf x} \nc{\bfy}{\mathbf y}
\nc{\base}[1]{\bfone^{\otimes ({#1}+1)}} 
\nc{\Cat}{\mathbf{Cat}} \delete{}
\nc{\detail}{\marginpar{\bf More detail}
    \noindent{\bf Need more detail!}
    \svp}
\nc{\Int}{\mathbf{Int}} \nc{\Mon}{\mathbf{Mon}}
\nc{\rbtm}{{shuffle }} \nc{\rbto}{{Rota-Baxter }} \nc{\remarks}{\noindent{\bf Remarks: }} \nc{\Rings}{\mathbf{Rings}} \nc{\Sets}{\mathbf{Sets}}
\nc{\BA}{{\Bbb A}} \nc{\CC}{{\Bbb C}} \nc{\DD}{{\Bbb D}} \nc{\EE}{{\Bbb E}} \nc{\FF}{{\Bbb F}} \nc{\GG}{{\Bbb G}} \nc{\HH}{{\Bbb H}} \nc{\LL}{{\Bbb L}} \nc{\NN}{{\Bbb N}} \nc{\KK}{{\Bbb K}} \nc{\QQ}{{\Bbb Q}} \nc{\RR}{{\Bbb R}} \nc{\TT}{{\Bbb T}} \nc{\VV}{{\Bbb V}} \nc{\ZZ}{{\Bbb Z}}
\nc{\cala}{{\mathcal A}} \nc{\calc}{{\mathcal C}} \nc{\cald}{{\mathcal D}} \nc{\cale}{{\mathcal E}} \nc{\calf}{{\mathcal F}} \nc{\calg}{{\mathcal G}} \nc{\calh}{{\mathcal H}} \nc{\cali}{{\mathcal I}} \nc{\call}{{\mathcal L}} \nc{\calm}{{\mathcal M}} \nc{\caln}{{\mathcal N}} \nc{\calo}{{\mathcal O}} \nc{\calp}{{\mathcal P}} \nc{\calr}{{\mathcal R}} \nc{\cals}{{\mathcal S}} \nc{\calt}{{\mathcal T}} \nc{\calw}{{\mathcal W}} \nc{\calk}{{\mathcal K}} \nc{\calx}{{\mathcal X}} \nc{\CA}{\mathcal{A}}
\nc{\fraka}{{\mathfrak a}} \nc{\frakA}{{\mathfrak A}} \nc{\frakb}{{\mathfrak b}} \nc{\frakB}{{\mathfrak B}} \nc{\frakD}{{\mathfrak D}} \nc{\frakH}{{\mathfrak H}} \nc{\frakM}{{\mathfrak M}} \nc{\bfrakM}{\overline{\frakM}} \nc{\frakm}{{\mathfrak m}} \nc{\frakP}{{\mathfrak P}} \nc{\frakN}{{\mathfrak N}} \nc{\frakp}{{\mathfrak p}} \nc{\frakS}{{\mathfrak S}} \nc{\frakx}{{\mathfrak x}} \nc{\ox}{\bar{\frakx}} \nc{\frakX}{{\mathfrak X}} \nc{\fraky}{{\mathfrak y}}
\nc{\frakW}{{\mathfrak W}}
\nc\dop{\delta}
\font\cyr=wncyr10 \font\cyrs=wncyr7
\begin{document}
\title{Relative locations of subwords in free operated semigroups and Motzkin words}

\author{Li Guo}
\address{
    Department of Mathematics and Computer Science,
         Rutgers University,
         Newark, NJ 07102, USA}
\email{liguo@rutgers.edu}

\author{Shanghua Zheng}
\address{Department of Mathematics,
    Lanzhou University,
    Lanzhou, Gansu 730000, China}
\email{zheng2712801@163.com}

\hyphenpenalty=8000
\date{\today}

\begin{abstract}
Bracketed words are basic structures both in mathematics (such as Rota-Baxter algebras) and mathematical physics (such as rooted trees) where the locations of the substructures are important.
In this paper we give the classification of the relative locations of two bracketed subwords of a bracketed word in an operated semigroup into the separated, nested and intersecting cases. We achieve this by establishing a correspondence between relative locations of bracketed words and those of words by applying the concept of Motzkin words which are the algebraic forms of Motzkin paths.
\end{abstract}

\subjclass[2010]{20M05, 20M99, 05E15, 16S15, 08B20}

\keywords{bracketed word, relative locations, operated semigroup, Motzkin word, Motskin path, rooted tree\\
Corresponding author: Li Guo, Department of Mathematics and Computer Science,
         Rutgers University,
         Newark, NJ 07102, USA; E-mail: liguo@rutgers.edu; Phone: 973-353-3917; Fax: 973-353-5270}

\maketitle

\tableofcontents

\hyphenpenalty=8000 \setcounter{section}{0}
\section{Introduction}
As a basic property of sets, there are three relative locations of any two subsets of a given set: separated (disjoint), nested (including) and intersecting (overlapping). See the proof of Theorem~\mref{thm:relw} for example. Similarly, there are three relative locations of two subwords in a given word, a property that is essential in rewriting systems (critical pairs) and Gr\"obner bases~\mcite{BN}. Analogous classification of relative locations of combinatorial objects, such as Feynman graphs, plays an important role in combinatorics and physics, for example in the renormalization of quantum field theory~\mcite{BW,CK2,Kr1,KW}.

The classification of relative locations can be quite subtle in some structures, especially when a non-identity operator is present, such as in differential algebras, Rota-Baxter algebras~\mcite{Gub} and, more generally, operated algebras~\mcite{BCD,BCQ,GSZ,ZGS}.
Further by~\mcite{Gop} free operated semigroups have natural combinatorial presentation as rooted trees which serve as the baby models for Feynman graphs~\mcite{CK,Kr1}.
Considering the importance of such classification in the study of these mathematics and physics structures, especially their Gr\"obner-Shirshov bases (compositions and diamond lemma)~\mcite{BCC,BCL}, it is crucial to establish such a classification. In this paper, we give an explicit correspondence between relative locations of two bracketed subwords and those of Motzkin words and subwords. As a direct consequence, we obtain the classification of the relative locations of bracketed subwords.

We take two steps in our treatment to deal with two subtle points of bracketed words. In Section~\mref{sec:relw} we deal with the first subtle point which is already present in studying relative locations of two subwords in comparison with two subsets, namely that one subword can appear at multiple locations in a given word. The concept of a $\star$-word~\mcite{BCQ} allows us to give a unique label to each appearance of a subword, called a \pla. We then show that each \pla corresponds uniquely to a substring of the string corresponding to the given word, converting the problem of studying subwords to that of subsets which can be solved easily as mentioned above. The second subtle point arise when we deal with bracketed words in Section~\mref{sec:bra} since the action of the bracket together with the product of the word gives the bracketed words a quite complicated structure. To resolve this difficulty, we make use of a bijection introduced in~\mcite{Gop} between bracketed words and a class of words called Motzkin words on a larger set. We then show in Section~\mref{sec:relb} that this bijection preserves the relative locations of the (bracketed) word, reducing the study of relative locations of bracketed words to the one of words for which we can apply Section~\mref{sec:relw}.

\section{Relative locations of subwords}
\mlabel{sec:relw}
In this section, we consider the relative locations of two subwords of a fixed word. This serves as both the prototype and preparation for our study of the relative locations of two bracketed subwords in later sections.

\subsection{Subwords}
\mlabel{ss:subw}

\begin{defn}
Let $Z$ be a set. Let $M(Z)$ be the free monoid on $Z$ consisting of {\bf words} in the alphabet set $Z$. Thus a word is either the {\bf empty word $1$} or of the form $w=z_1\cdots z_n, z_i\in Z$, $1\leq i\leq n.$ A {\bf subword} of $w$ is defined to be a word $u\neq 1$ that is a part of $w$. Let $S(Z):=M(Z)\backslash \{1\}$ be the free semigroup on $Z$.
\end{defn}

We emphasize that the empty word $1$ is not taken to be a subword in this paper.

Note that a subword $u$ of $w\in M(Z)$ may appear in $w$ at multiple locations. For example, for $z\in Z$, $u:=z$ appears in  $w:=zzz$ at three different locations. It is often important to be precise about the location of a subword. This is the case, for example, in the study of rewriting systems and Gr\"obner-Shirshov bases~\mcite{BN,BCQ,GSZ,ZGS}. For this purpose, we need additional information for the location of the subword $u$. Such information is provided
by the concept of $\star$-words~\mcite{BCQ}.
\begin{defn}
Let $Z$ be a set.
\begin{enumerate}
\item Let $\star$ be an element not in $Z$.  A {\bf $\star$-word on $Z$} is a word in $M(Z\cup\{\star\})$ in which $\star$ appears exactly once. The set of $\star$-words on $Z$ is denoted by $M^{\star}(Z)$.
\item
More generally, for $\star_1,\cdots,\star_k$ not in $Z$, denote $\vec{\star}=(\star_1,\cdots,\star_k)$. A {\bf$\vec{\star}$-word on $Z$} is a word in $M(Z\cup\{\star_1,\cdots,\star_k\})$ in which $\star_i$ appears exactly once for each $i=1,\cdots,k$. The set of $\vec{\star}$-words is denoted by  $M^{\vec{\star}}(Z)$. When $k=2$, it is denoted by $M^{\star_1,\star_2}(Z)$.
\item
For $p\in M^{\vec{\star}}(Z)$ and $u_1,\cdots, u_k\in M(Z)$, let $p|_{u_1,\cdots,u_k}=p|_{\star_1\mapsto u_1,\cdots,\star_k\mapsto u_k}$ denote the word in $M(Z)$ when the $\star_i$ in $p$ is replaced by $u_i$, where $i=1,\cdots,k$.
\end{enumerate}
\end{defn}

Now we can be more precise on a particular appearance of a subword.

\begin{defn}
Let $u, w\in M(Z)$ be words on $Z$. A {\bf \pla} of $u$ in $w$ (by $p$) is a pair $(u,p)$ where $p$ is in $M^{\star}(Z)$ such that $p|_u=w$.
\mlabel{defn:occst}
\end{defn}
Thus $(u_1,p_1)=(u_2,p_2)$ means $u_1=u_2$ and $p_1=p_2$.

Of course $u$ is a subword of $w$ if there is $p\in M^\star(Z)$ such that $(u,p)$ is a \pla of $u$ in $w$. However, the usefulness of the \pla notion is its role in distinguishing different appearances of $u$ in $w$. For example, the three appearances of $u=z$ in $w=zzz$ are identified by the three \plas $(u,p_1)$, $(u,p_2)$ and $(u,p_3)$ where $p_1=\star zz$, $p_2=z\star z$ and $p_3=zz\star$.

The concept of a \pla is also essential in determining the relative locations of two subwords of a given word.

\begin{exam}
Let $Z=\{x,y\}$ and let $w=xyxyxy$. Then $u:=xyx$ appears at two locations in $w$ and $v:=xy$ appears at three locations, as shown in the following equation.
$$
w=\underbrace{xyx}_{(u,\,p)}yxy
=\overbrace{xy}^{(v,\,q_1)}
\overbrace{xy}^{(v,\,q_2)}\overbrace{xy}^{
(v,\,q_3)}.
$$
Take $(u,p)$ to be the first appearance (from the left) of $u$ in $w$. Thus $p=\star yxy$. Then the three \plas of $v=xy$ in $w$, given by $(v,q_i), i=1,2,3,$ with $q_1=\star xyxy$, $q_2=xy\star xy$ and $q_3=xyxy\star$). These three placements of $v$ are in three different kinds of relative locations with respect to $u$: the left $v$ (in $(v,q_1)$) is a subword of $u$, the middle $v$ (in $(v,q_2)$) is not a subword of $u$ but has a nonempty intersection with $u$, and the right $v$ (in $(v,,q_3)$) is disjoint with $u$.
\mlabel{ex:1-3}
\end{exam}
This situation can again be made precise by $\star$-words.

\begin{defn}
Let $w$ be a word in $M(Z)$. Two \plas  $(u_1,p_1)$ and  $(u_2,p_2)$ are called
\begin{enumerate}
\item
{\bf separated} if
there exists an element $p $ in $ M^{\star_1,\star_2}(Z)$ such that $w=p|_{u_1,u_2}$, $p_1|_{\star_1}=p|_{\star_1,u_2}$ and $p_2|_{\star_2}= p|_{u_1,\star_2}$;
\mlabel{it:smm1}
\item
{\bf nested} if there exists an element $p$ in $M^{\star}(Z)$ such that  $p_1=p_2|_p$, or
 $p_2=p_1|_p$;
\mlabel{it:smm2}
\item
{\bf intersecting} if there exist an element
$p$ in $M^{\star}(Z)$ and elements $a,b,c$ in $ S(Z)$ such that
\begin{enumerate}
\item
either $w=p|_{abc}, p_1=p|_{\star c}, p_2=p|_{a\star}$;
\item
or $w=p|_{abc}, p_1=p|_{a\star}, p_2=p|_{\star c}$.
\end{enumerate}
\mlabel{it:smm3}
\end{enumerate}
\mlabel{defn:smm}
\end{defn}

\begin{exam}
With the $w, u$ and the three appearances of $v$ in Example~\mref{ex:1-3}, we have the corresponding \plas $(u,p)$ with $p=\star yxy$, and $(v,q_1)$, $(v,q_2)$ and $(v,q_3)$. Then $(u,p)$ and $(v,q_1)$ (resp. $(v,q_2)$, resp. $(v,q_3)$) are nested (resp. intersecting, resp. separated).
\mlabel{ex:1-3b}
\end{exam}

\subsection{Substrings}
\mlabel{ss:subst}
We now give another description of a \pla of a subword that makes it easier to classify the relative locations of two subwords.
Denote $[n]:=\{1,\cdots,n\}$ and $[i,k]:=\{i,\cdots,k\}$ for $n, i\geq 1, k\geq i$.

Let $Z$ be a set. Let $w=z_1\cdots z_n, z_i\in Z$, $1\leq i\leq n,$ be a word in $M(Z)$. Let $\pl:=(u,p)$ be a \pla of $u$ in $w$. Then $p$ is of the form $p=x_1\cdots x_j \star x_{j+1}\cdots x_m$ with $x_i\in Z, 1\leq i\leq m,$ and $u=y_1\cdots y_t$ with $y_j\in Z, 1\leq j\leq t$. Comparing
$$ x_1\cdots x_j y_1\cdots y_t x_{j+1}\cdots x_m=p|_u=w =z_1\cdots z_n$$
in the free monoid $M(Z)$, we obtain $p=z_1\cdots z_{j-1}\star z_{k+1}\cdots z_n$ for unique $j=j_\pl$ and $k=k_\pl$. Thus $u=z_j\cdots z_k$.

\begin{defn}
With notations as above, the set $I_\pl:=[j_\pl,k_\pl]$ is called the {\bf \loc} of the \pla $\pl=(u,p)$ in $w$. Denote \begin{equation}
\mathcal{LO}(w):=\{[j,k]\,|\,  1\leq j\leq k\leq n\}.
\mlabel{eq:lo}
\end{equation}
\end{defn}
Also denote
\begin{equation}
\mathcal{PL}(w):=\{(u,p)\,|\, (u,p) \text{ is a \pla in } w\}.
\mlabel{eq:pla}
\end{equation}

\begin{prop}
Let $1\neq w\in M(Z)$. The map
$$\etaup: \mathcal{PL}(w)\longrightarrow  \mathcal{LO}(w),\quad
\pl=(u,p)\mapsto [j_\pl,k_\pl], $$
is a bijection.
\end{prop}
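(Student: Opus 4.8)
The plan is to construct an explicit two-sided inverse to $\etaup$, using the analysis of a placement in terms of a word index range that was carried out just before the statement. Given $w = z_1\cdots z_n$, I would define $\muup\colon \mathcal{LO}(w)\to\mathcal{PL}(w)$ by sending $[j,k]$ to the placement $(u,p)$ with $u := z_j\cdots z_k$ and $p := z_1\cdots z_{j-1}\star\,z_{k+1}\cdots z_n$. First I would check that this is well defined: since $1\le j\le k\le n$, the word $u$ is nonempty and hence a legitimate subword, and $p$ lies in $M^\star(Z)$ because $\star$ appears exactly once in it; moreover $p|_u = z_1\cdots z_{j-1}\,z_j\cdots z_k\,z_{k+1}\cdots z_n = w$, so $(u,p)$ is indeed a placement in $w$, i.e.\ an element of $\mathcal{PL}(w)$.

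Next I would verify $\etaup\circ\muup = \id_{\mathcal{LO}(w)}$. Starting from $[j,k]$ and forming $\pl = (u,p) = \muup([j,k])$, the discussion preceding the statement shows that the indices $j_\pl, k_\pl$ attached to $\pl$ are characterized uniquely by $p = z_1\cdots z_{j_\pl - 1}\star\,z_{k_\pl+1}\cdots z_n$; since our $p$ already has exactly this shape with $j_\pl = j$ and $k_\pl = k$, uniqueness forces $\etaup(\pl) = [j,k]$. The only thing to be slightly careful about here is the boundary behaviour (when $j = 1$ the prefix $z_1\cdots z_{j-1}$ is the empty word, and similarly when $k = n$), but the formulas degenerate correctly in those cases.

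Then I would verify $\muup\circ\etaup = \id_{\mathcal{PL}(w)}$. Take an arbitrary placement $\pl = (u,p)\in\mathcal{PL}(w)$. By the paragraph before the statement, comparing $p|_u = w$ in the free monoid $M(Z)$ yields \emph{unique} indices $j = j_\pl$, $k = k_\pl$ with $p = z_1\cdots z_{j-1}\star\,z_{k+1}\cdots z_n$ and $u = z_j\cdots z_k$. Applying $\muup$ to $\etaup(\pl) = [j_\pl,k_\pl]$ reproduces exactly this pair $(u,p)$, so $\muup(\etaup(\pl)) = \pl$. Since $\muup$ is a two-sided inverse, $\etaup$ is a bijection.

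The substantive content is entirely the uniqueness statement "$p = z_1\cdots z_{j-1}\star\,z_{k+1}\cdots z_n$ for unique $j,k$", which is the one step I expect to be the real obstacle; it rests on the fact that $M(Z)$ is a free monoid, so that an equality of two products of generators (here the letters of $w$, each in $Z$, versus the concatenation of the letters of $p$ before $\star$, the letters of $u$, and the letters of $p$ after $\star$) forces a letter-by-letter match and hence fixes the breakpoints. Since the excerpt has already performed this comparison in the text immediately preceding the Proposition, the proof amounts to packaging that observation as the assertion that $\muup$ and $\etaup$ are mutually inverse.
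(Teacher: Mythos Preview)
Your proposal is correct and essentially matches the paper's proof: the paper shows surjectivity by exhibiting exactly your map $\muup$, and shows injectivity by the same uniqueness-of-breakpoints observation you invoke for $\muup\circ\etaup=\id$. The only difference is packaging---you frame it as a two-sided inverse, the paper as separate injectivity and surjectivity arguments---but the content is identical.
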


\begin{proof}
Let $w=z_1\cdots z_n$. Given a \pla $(u,p)$ in $w$. Then $p=z_1\cdots z_{j-1}\star z_{k+1}\cdots z_{n}\in M^{\star}(Z)$ and $u=z_j\cdots z_k, 1\leq j\leq k\leq n$. Thus if $\pl_1=(u_1,p_1)$ and $\pl_2=(u_2,p_2)$ are in $\mathcal{PL}(w)$ and $\pl_1\neq \pl_2$, then we have $p_1\neq p_2$. Hence either $j_{\pl_1}\neq j_{\pl_2}$ or $k_{\pl_1}\neq k_{\pl_2}$. Then $\etaup(u_1,p_1)\neq \etaup(u_2,p_2)$. Hence $\etaup$ is injective.

Further for any $[j,k]\in \mathcal{LO}(w)$, define $u=z_j\cdots z_k$ and $p=z_1\cdots z_{j-1}\star z_{k+1}\cdots z_n$. Then we have $p|_u=w$ and $\etaup(u,p)=[j,k]$. Hence $\etaup$ is surjective.
\end{proof}

\begin{defn}
Two nonempty subsets $I$ and $J$ of $[n]$ are called
\begin{enumerate}
\item {\bf separated} if $I\cap J =\emptyset$;
\item {\bf nested} if $I\subseteq J$ or $J\subseteq I$;
\item {\bf intersecting} if $I\cap J \neq \emptyset$, $I\not \subseteq J$ and $J\not \subseteq I$.
\end{enumerate}
\end{defn}

Consider the $w, u$ and $v$ in Example~\mref{ex:1-3} and their corresponding \plas $(u,p)$, $(v,q_1)$, $(v,q_2)$ and $(v,q_3)$ in Example~\mref{ex:1-3b}. The \locs of the \plas are $\etaup(u,p)=[1,3]$ and $\etaup(v,q_1)=[1,2], \etaup(v,q_2)=[3,4]$ and $\etaup(v,q_3)=[5,6]$. Then we see that, as subsets of $[6]$, $[1,3]$ and $[1,2]$ (resp. $[3,4]$, resp. $[5,6]$) are also nested (resp. intersecting, resp. separated).
The next theorem shows that this equivalence holds in general.

\begin{theorem}
Let $Z$ be a set. Let $1\neq w=z_1\cdots z_n$ be in $M(Z)$ where $z_i\in Z, 1\leq i\leq n$. Then  \plas $\pl_1=(u_1,p_1)$ and $\pl_2=(u_2,p_2)$ in $w$ are separated (resp. nested, resp. intersecting) if and only if the corresponding subsets $I_{\pl_1}$ and $I_{\pl_2}$ of $[n]$ are separated (resp. nested, resp. intersecting).
\mlabel{thm:main3}
\end{theorem}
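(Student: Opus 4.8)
The plan is to prove the three equivalences separately, in each case chasing the definitions through the bijection $\etaup$ of the previous proposition, i.e.\ writing $\pl_i = (u_i,p_i)$ with $I_{\pl_i} = [j_i,k_i]$ so that $u_i = z_{j_i}\cdots z_{k_i}$ and $p_i = z_1\cdots z_{j_i-1}\star z_{k_i+1}\cdots z_n$. Since the three relative-location cases for subsets of $[n]$ are mutually exclusive and exhaustive, and likewise (as one should note, or as follows from the subset case once two of the three implications are in hand) for \plas, it suffices to prove one direction of each equivalence — say, that $I_{\pl_1},I_{\pl_2}$ separated (resp.\ nested, resp.\ intersecting) implies $\pl_1,\pl_2$ separated (resp.\ nested, resp.\ intersecting). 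The reverse directions then come for free by the mutual exclusivity: if $\pl_1,\pl_2$ are, say, nested, then $I_{\pl_1},I_{\pl_2}$ cannot be separated or intersecting (those would force $\pl_1,\pl_2$ to be so as well, contradicting exclusivity on the \pla side), hence must be nested.

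For the \textbf{separated} case: assume $[j_1,k_1]\cap[j_2,k_2]=\emptyset$, so without loss of generality $k_1 < j_2$. Then set $p := z_1\cdots z_{j_1-1}\,\star_1\,z_{k_1+1}\cdots z_{j_2-1}\,\star_2\,z_{k_2+1}\cdots z_n \in M^{\star_1,\star_2}(Z)$; one checks directly that $p|_{u_1,u_2} = w$, $p_1|_{\star_1} = p|_{\star_1,u_2}$ and $p_2|_{\star_2} = p|_{u_1,\star_2}$, matching Definition~\mref{defn:smm}\eqref{it:smm1}. For the \textbf{nested} case: assume, say, $[j_1,k_1]\subseteq[j_2,k_2]$, i.e.\ $j_2\leq j_1\leq k_1\leq k_2$. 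Then $u_1 = z_{j_1}\cdots z_{k_1}$ is a subword of $u_2 = z_{j_2}\cdots z_{k_2}$, and putting $p := z_{j_2}\cdots z_{j_1-1}\star z_{k_1+1}\cdots z_{k_2} \in M^\star(Z)$ we get $p|_{u_1} = u_2$ and $p_1 = p_2|_p$; the other subcase $I_{\pl_2}\subseteq I_{\pl_1}$ is symmetric. For the \textbf{intersecting} case: assume $[j_1,k_1]\cap[j_2,k_2]\neq\emptyset$ with neither contained in the other, so either $j_1 < j_2 \leq k_1 < k_2$ or $j_2 < j_1 \leq k_2 < k_1$. In the first subcase take $a := z_{j_1}\cdots z_{j_2-1}$, $b := z_{j_2}\cdots z_{k_1}$, $c := z_{k_1+1}\cdots z_{k_2}$ (all in $S(Z)$ by the strict inequalities) and $p := z_1\cdots z_{j_1-1}\star z_{k_2+1}\cdots z_n$; then $w = p|_{abc}$, $u_1 = ab$ so $p_1 = p|_{\star c}$, and $u_2 = bc$ so $p_2 = p|_{a\star}$, giving Definition~\mref{defn:smm}\eqref{it:smm3}(a). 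The second subcase gives \eqref{it:smm3}(b) by the same construction with the roles of the two \plas swapped.

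The routine part is the verification that the substitutions behave as claimed — essentially bookkeeping in the free monoid $M(Z)$, using that $p|_u$ is just concatenation. The main point requiring care, and the closest thing to an obstacle, is the \emph{boundedness of the index segments}: one must confirm that the words $a,b,c$ (and the segments appearing inside the $p$'s) are genuinely nonempty elements of $S(Z)$ exactly when the case hypothesis says so — this is where the strict versus non-strict inequalities in the definitions of intersecting versus nested get used, and it is why the case analysis on the relative order of $j_1,j_2,k_1,k_2$ must be done cleanly. I would also explicitly record the observation that, because every pair $(I_{\pl_1},I_{\pl_2})$ falls into exactly one of the three subset-cases and the three \pla-cases are pairwise incompatible, the three forward implications already established yield the full biconditional, so no separate argument for the converses is needed.
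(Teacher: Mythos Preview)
Your constructions for the direction ``intervals separated/nested/intersecting $\Rightarrow$ \plas\ separated/nested/intersecting'' are correct and coincide with the paper's ``Conversely'' arguments essentially verbatim. The gap is in the logical shortcut you propose for the reverse directions. You write that once the three forward implications are in place, the converses follow because a \pla\ pair being, say, nested would otherwise force the intervals into a different case, which in turn would force the \plas\ into that case too, ``contradicting exclusivity on the \pla\ side.'' But that exclusivity on the \pla\ side is \emph{not} available at this point: it is precisely the content of the next theorem (Theorem~\mref{thm:relw}), and the paper proves Theorem~\mref{thm:relw} \emph{from} Theorem~\mref{thm:main3}. The three \pla\ relations are existential statements (``there exists $p\in M^{\star_1,\star_2}(Z)$ \ldots'', etc.), and nothing you have established so far rules out that a given pair of \plas\ satisfies two of them simultaneously. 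Your parenthetical ``as one should note, or as follows from the subset case once two of the three implications are in hand'' does not close this: the subset trichotomy plus the three implications intervals $\Rightarrow$ \plas\ only shows that the \pla\ cases are jointly \emph{exhaustive}, not that they are mutually exclusive.

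The fix is simply to prove the directions ``\plas\ $X$ $\Rightarrow$ intervals $X$'' directly, which is what the paper does. These are no harder than what you already wrote: for separated one reads off from the given $p\in M^{\star_1,\star_2}(Z)$ that $\{I_{\pl_1},I_{\pl_2}\}=\{[j_1,k_1],[j_2,k_2]\}$ with $k_1<j_2$; for nested one compares $p_1=p_2|_p$ letter by letter to get $j_2\leq j_1\leq k_1\leq k_2$; for intersecting one extracts from $p_1=p|_{\star c}$, $p_2=p|_{a\star}$, $w=p|_{abc}$ with $a,b,c\neq 1$ the chain $j_1<j_2\leq k_1<k_2$ (or its mirror). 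With both directions proved for each case, Theorem~\mref{thm:relw} then follows as the paper derives it, rather than being assumed in advance.
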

\begin{proof}
First suppose that the  \plas $(u_1,p_1)$ and $(u_2,p_2)$ in $w$  are separated. Then there exists an element $p$ in $M^{\star_1,\star_2}(Z)$ such that
$$w=p|_{u_1,u_2}, \quad p_1|_{\star_1}=p|_{\star_1,u_2}, \quad p_2|_{\star_2}=p|_{u_1,\star_2}.$$
From $p|_{u_1,u_2}=z_1\cdots z_n$, there are $1\leq j_1\leq k_1<j_2\leq k_2\leq n$  such that
$$p=z_1\cdots z_{j_1-1}\star_1 z_{k_1+1}\cdots z_{j_2-1}\star_2 z_{k_2+1}\cdots z_n,$$
or
$$p=z_1\cdots z_{j_1-1}\star_2 z_{k_1+1}\cdots z_{j_2-1}\star_1 z_{k_2+1}\cdots z_n.$$
Thus we have $\{I_{\pl_1}, I_{\pl_2}\}=\{[j_1,k_1], [j_2,k_2]\}$ and hence $I_{\pl_1}\cap I_{\pl_2}=\emptyset$.
Conversely, suppose $I_{\pl_1}\cap I_{\pl_2}=\emptyset$. Let $I_{\pl_1}=[j_1,k_1]$ and $I_{\pl_2}=[j_2,k_2]$. Then we have $k_1<j_2$ or $k_2<j_1$. Without loss of generality, assume $k_1<j_2$. Then take $u_1=z_{j_1}\cdots z_{k_1}$ and $u_2=z_{j_2}\cdots z_{k_2}$. Define $p=z_1\cdots z_{j_1-1} \star_1 z_{k_1+1}\cdots z_{j_2-1} \star_2 z_{k_2+1}\cdots z_n$. Then we have $$w=p|_{u_1,u_2}, \quad p_1|_{\star_1}=p|_{\star_1,u_2},\quad  p_2|_{\star_2}=p|_{u_1,\star_2},$$
as needed.
\smallskip

Next suppose that $(u_1,p_1)$ and $(u_2,p_2)$ are nested. Then there exists an element $p$ in $M^{\star}(Z)$ such that $p_1=p_2|_{p}$ or $p_2=p_1|_{p}$. Without loss of generality, we can assume $p_1=p_2|_p$. From $w=p_1|_{u_1}$ and $w=p_2|_{u_2}$ we obtain
$$z_1\cdots z_{j_1-1}\star z_{k_1+1}\cdots z_n=p_1=
p_2|_p=z_1\cdots z_{j_2-1}\star|_p z_{k_2+1}\cdots z_n.$$
Thus $j_2\leq j_1\leq k_1\leq k_2$ and hence $I_{\pl_1}\subseteq I_{\pl_2}$.
Conversely, suppose $I_{\pl_1}$ and $I_{\pl_2}$ are nested. We may assume that $I_{\pl_1}=[j_1,k_1]\subseteq I_{\pl_2}=[j_2,k_2]$. Then define $p=z_{j_2}\cdots z_{j_1-1}\star z_{k_1+1}\cdots z_{k_2}$, with the convention that $p=\star z_{k_1+1}\cdots z_{k_2}$ if $j_1=j_2$ and $p=z_{j_2}\cdots z_{j_1-1}\star$ if $k_1=k_2$. We have $p_2|_p=p_1$. Hence $(u_1,p_1)$ and $(u_2,p_2)$ are nested. \smallskip

Finally, suppose that $(u_1,p_1)$ and $(u_2,p_2)$ are intersecting. Then there exist $p$ in $M^{\star}(Z)$ and $a,b,c$ in $ S(Z)$ such that
$$w=p|_{abc}, \quad p_1=p|_{\star c}, \quad p_2=p|_{a\star}, $$
or
$$w=p|_{abc}, \quad p_1=p|_{a\star}, \quad p_2=p|_{\star c}.$$
We just need to consider the first case since the proof of the second case is similar. Denote $p=z_1\cdots z_{j-1}\star z_{k+1}\cdots z_n$. From
$$z_1\cdots z_{j_1-1}\star z_{k_1+1}\cdots z_n=p_1
=p|_{\star c}= z_1\cdots z_{j-1}\star c z_{k+1}\cdots z_n$$
we obtain $j_1=j$ and $c=z_{k_1+1}\cdots z_k$. Since $c \neq 1$, we have $k_1<k$.
Similarly, from
$$z_1\cdots z_{j_2-1}\star z_{k_2+1}\cdots z_n=p_2
=p|_{a\star}= z_1\cdots z_{j-1}a\star z_{k+1}\cdots z_n$$
we obtain $k_2=k$ and $a=z_{j}\cdots z_{j_2-1}$. Since $a \neq 1$, we have $j<j_2$. Consequently, 
$$ w=p|_{abc}=z_1\cdots z_{j-1}abcz_{k+1}\cdots z_n=z_1\cdots z_{j_2-1}b z_{k_1+1}\cdots z_n.$$
Since $b\neq 1$, we have $j_2\leq k_1$. Then $$j_1=j<j_2\leq k_1<k=k_2.$$
Thus
$$I_{\pl_1}\cap I_{\pl_2}=[j_2,k_1]\neq \emptyset, \quad I_{\pl_1}\backslash I_{\pl_2}=[j_1,j_2-1]\neq \emptyset, \quad I_{\pl_2}\backslash I_{\pl_1}=[k_1+1,k_2]\neq \emptyset.$$
Hence $I_{\pl_1}\nsubseteq I_{\pl_2}$ and $I_{\pl_2}\nsubseteq I_{\pl_1}$. Thus $I_{\pl_1}$ and $I_{\pl_2}$ are intersecting.

Conversely, suppose that $I_{\pl_1}\cap I_{\pl_2} \neq \emptyset$, $I_{\pl_1}\not \subseteq I_{\pl_2}$ and $I_{\pl_2}\not \subseteq I_{\pl_1}$. From $I_{\pl_1}\cap I_{\pl_2} \neq \emptyset$, we have $j_1\leq k_2$ and $j_2\leq k_1$. Then from $I_{\pl_1}\not \subseteq I_{\pl_2}$ and $I_{\pl_2}\not \subseteq I_{\pl_1}$, we have $k_1>k_2$ and $j_1>j_2$, or $k_1<k_2$ and $j_1<j_2$. We just consider the first case with the second case being similar. Then we have
$j_2<j_1\leq k_2<k_1$. Define
$$p=z_1\cdots z_{j_2-1}\star z_{k_1+1}\cdots z_n, \quad a=z_{j_2}\cdots z_{j_1-1}, \quad b=z_{j_1}\cdots z_{k_2}, \quad c=z_{k_2+1}\cdots z_{k_1}.$$
Then we have
$$ a, b, c\neq 1, \quad w=p|_{abc},\quad p_2=p|_{\star c},\quad p_1=p|_{a\star}.$$
This shows that $(u_1,p_1)$ and $(u_2,p_2)$ are intersecting.
\end{proof}

The following result is the classification of the relative locations of two \plas in the free monoid $M(Z)$. Its generalization to operated monoids will be treated in the subsequent sections.

\begin{theorem}
Let $1\neq w$ be a word in $M(Z)$. For any two \plas $\pl_1=(u_1, p_1)$ and $\pl_2=(u_2,p_2)$ in $w$,  exactly one of the following is true:
\begin{enumerate}
\item
$(u_1, p_1)$ and $(u_2,p_2)$ are separated;
\item
$(u_1, p_1)$ and $(u_2,p_2)$ are nested;
\item
$(u_1, p_1)$ and $(u_2,p_2)$ are intersecting.
\end{enumerate}
\mlabel{thm:relw}
\end{theorem}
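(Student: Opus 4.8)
The plan is to reduce everything to the corresponding statement about subsets of $[n]$, which is essentially the ``basic property of sets'' alluded to in the introduction. By Theorem~\mref{thm:main3}, the two \plas $\pl_1$ and $\pl_2$ are separated (resp. nested, resp. intersecting) if and only if the subsets $I_{\pl_1}$ and $I_{\pl_2}$ of $[n]$ are separated (resp. nested, resp. intersecting). Since these are genuine equivalences, ``exactly one of the three holds for the \plas'' follows immediately once we prove ``exactly one of the three holds for any two nonempty subsets $I, J$ of $[n]$''. So the whole proof is: invoke Theorem~\mref{thm:main3}, then dispatch the set-theoretic trichotomy.

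For the set-theoretic part, I would argue as follows. Let $I, J$ be nonempty subsets of $[n]$. Consider first whether $I \cap J = \emptyset$. If so, $I$ and $J$ are separated by definition; and they are not nested, because $I \subseteq J$ together with $I \neq \emptyset$ would force $I \cap J = I \neq \emptyset$, and symmetrically for $J \subseteq I$; and they are not intersecting since that case explicitly requires $I \cap J \neq \emptyset$. So in this case exactly the ``separated'' alternative holds. Now suppose $I \cap J \neq \emptyset$. Then they are not separated. Consider the two sub-cases: either at least one of $I \subseteq J$ or $J \subseteq I$ holds, in which case $I$ and $J$ are nested and not intersecting (intersecting requires $I \not\subseteq J$ and $J \not\subseteq I$); or neither containment holds, i.e. $I \not\subseteq J$ and $J \not\subseteq I$, in which case together with $I \cap J \neq \emptyset$ the triple defining ``intersecting'' is satisfied, and they are not nested. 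In every case exactly one of the three alternatives holds.

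Finally I would note that the three cases for subsets are logically exhaustive and pairwise mutually exclusive by construction: the condition $I \cap J = \emptyset$ versus $I \cap J \neq \emptyset$ splits off ``separated'' from the other two, and within $I \cap J \neq \emptyset$ the condition ``$I \subseteq J$ or $J \subseteq I$'' versus its negation ``$I \not\subseteq J$ and $J \not\subseteq I$'' splits ``nested'' from ``intersecting''. Transporting this trichotomy back through the equivalences of Theorem~\mref{thm:main3} gives the statement for \plas.

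I do not anticipate a serious obstacle here; the statement is a packaging of Theorem~\mref{thm:main3} plus an elementary observation about sets. The only mild care needed is the mutual exclusivity ``exactly one'', which requires checking that the defining conditions of the three notions (for \plas, or equivalently for subsets) cannot hold simultaneously — and for subsets this is transparent from the $\cap = \emptyset$ / $\cap \neq \emptyset$ and containment dichotomies above. One could alternatively prove the trichotomy directly at the level of \plas using the explicit normal forms $p = z_1\cdots z_{j-1}\star z_{k+1}\cdots z_n$ from Section~\mref{ss:subst}, but routing through Theorem~\mref{thm:main3} is cleaner and avoids redoing that case analysis.
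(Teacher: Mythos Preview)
Your proposal is correct and matches the paper's own proof essentially verbatim: the paper also invokes Theorem~\mref{thm:main3} to reduce to the trichotomy for the subsets $I_{\pl_1}, I_{\pl_2}$ of $[n]$, and then observes exactly the same empty-intersection/containment dichotomy you spell out. Your write-up is in fact slightly more detailed on the mutual exclusivity than the paper's, but the argument is the same.
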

\begin{proof}
Let $w=z_1\cdots z_n$ with $z_1,\cdots,z_n \in Z$, $n\geq 1$. By Theorem~\mref{thm:main3}, we only need to prove that the same conclusion holds for $I_{\pl_1}$ and $I_{\pl_2}$. But this follows from the simple fact that, for the two subsets $I_{\pl_1}$ and $I_{\pl_2}$ of $[n]$, exactly one of the following is true:
\begin{enumerate}
\item $I_{\pl_1}$ and $I_{\pl_2}$ have empty intersection ($\Leftrightarrow$ $I_{\pl_1}$ and $I_{\pl_2}$ are separated);
\item $I_{\pl_1}$ and $I_{\pl_2}$ have nonempty intersection, and $I_{\pl_1}\subseteq I_{\pl_2}$ or $I_{\pl_2}\subseteq I_{\pl_1}$ ($\Leftrightarrow$ $I_{\pl_1}$ and $I_{\pl_2}$ are nested);
\item or $I_{\pl_1}$ and $I_{\pl_2}$ have nonempty intersection, and $I_{\pl_1}\not\subseteq I_{\pl_2}$ and $I_{\pl_2}\not\subseteq I_{\pl_1}$ ($\Leftrightarrow$ $I_{\pl_1}$ and $I_{\pl_2}$ are intersecting).
\end{enumerate}
\end{proof}

\section{Bracketed words and Motzkin words}
\mlabel{sec:bra}
We recall the concepts of bracketed words and Motzkin words \mcite{Al,Gop,ST,GSZ} before generalizing Theorem~\mref{thm:relw} to this context.

\subsection{Bracketed words}
We first recall the concept and construction of free operated monoids, following~\mcite{Gop,GSZ}.

\begin{defn}
{\rm An {\bf operated monoid}  is a monoid $U$ together with a map  $P: U\to U$. A {\bf morphism} from an operated monoid\,  $U$ with a map $P:U\to U$ to an operated monoid $V$ with a map $Q:V\to V$ is a monoid homomorphism $f :U\to V$ such that $f \circ P= Q \circ f$. } \mlabel{de:mapset}
\end{defn}
\begin{defn}
A {\bf free operated monoid } on a set $Y$ is an operated monoid $(U_Y,P_Y)$ together with a map $j_Y: Y\rightarrow U_Y$ with the property that, for any operated monoid  $(V,Q)$ and any map $f:Y\rightarrow V$, there is a unique morphism $\bar{f}: (U_Y,P_Y)\rightarrow (V,Q)$ of operated monoids  such that $f=\bar{f}\circ j_Y$.
\end{defn}

For any set $Y$,  let $\lc Y\rc:=\{\lc y\rc \,|\, y\in Y\}$ denote a set indexed by, but  disjoint from $Y$. Let $X$ be a given set.
We will construct a direct system $\{\frakM_n:=\frakM_n(X)\}_{n \geq 0}$ of monoids with natural embeddings $\uni{i}_{n-1}:\frakM_{n-1}\hookrightarrow
    \frakM_{n}$ for $n \geq 1$ by
induction on $n$. The free operated monoid $\frakM(X)$ on $X$ is the direct limit of the system, after we equip it with a natural operator.

First let $\frakM_0:=M(X)$. Then define the monoid
$$\frakM_1:=M(X\cup \lc \frakM_0 \rc)=M(X\cup \lc M(X)\rc)$$
and denote the natural embedding of monoids induced by the inclusion $X\hookrightarrow X\cup \lc \frakM_0\rc$ by
$$\uni{i}_{0}: \frakM_0=M(X)\hookrightarrow M(X\cup \lc
\frakM_0 \rc) = \frakM_1.$$

Inductively assuming that $\frakM_{n}$ and $\uni{i}_{n-1}:\frakM_{n-1}\hookrightarrow \frakM_{n}$ have been defined for $n\geq 1$, we define the monoid
\begin{equation}
 \frakM_{n+1}:=M(X\cup \lc\frakM_{n}\rc )
 \mlabel{eq:frakm}
 \end{equation}
From the embedding of monoids $\uni{i}_{n-1}: \frakM_{n-1} \hookrightarrow \frakM_{n}$, we have an embedding $ \lc\frakM_{n-1}\rc \hookrightarrow \lc \frakM_{n} \rc$. By the freeness of $\frakM_{n}=M(X\cup \lc\frakM_{n-1}\rc)$, we obtain a natural embedding of monoids
$$
\uni{i}_{n}:\frakM_{n} = M(X\cup \lc\frakM_{n-1}\rc)\hookrightarrow
    M(X\cup \lc \frakM_{n}\rc) =\frakM_{n+1}.
$$
This completes our inductive definition of the direct system. Let
$$ \frakM(X):=\bigcup_{n\geq 0}\frakM_n=\dirlim
\frakM_n$$ be the direct limit of the system. We note that $\frakM(X)$ is a monoid. By taking direct limit on both sides of $\frakM_n = M(X\cup \lc \frakM_{n-1}\rc)$, we obtain
\begin{equation}\frakM(X)=M(X\cup \lc \frakM(X)\rc)
\mlabel{eq:omid}\end{equation}
whose elements are called {\bf  bracketed words}.

The {\bf depth} of $f\in \frakM(X)$ is defined to be
\begin{equation}
\dep(f):=\min \left \{n\,\big|\, f\in \frakM_n\right\}.
\mlabel{eq:dep}
\end{equation}
The following result shows that $\frakM(X)$ is the equivalence of free monoids in the category of operated monoids.
\begin{lemma}{\bf \mcite{Gop}}
Define the map $\lc\ \rc: \frakM(X) \to \frakM(X)$ by taking $w \in \frakM(X)$ to $\lc w\rc$. Let $j_X:X \to \frakM(X)$ be the natural embedding. Then
the triple $(\frakM(X),\lc\ \rc, j_X)$ is the free operated monoid on $X$.
\mlabel{pp:freetm}
\end{lemma}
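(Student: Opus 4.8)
I would prove the lemma by verifying directly, from the inductive construction of $\frakM(X)$, the universal property in the definition of a free operated monoid. Fix an operated monoid $(V,Q)$ and a map $f:X\to V$; the task is to produce a unique operated monoid morphism $\bar f:(\frakM(X),\lc\ \rc)\to(V,Q)$ with $\bar f\circ j_X=f$. The idea is to build $\bar f$ one level at a time, using at each stage only the universal property of an ordinary free monoid, and then to glue along the embeddings $\uni i_n$ to obtain a map on the direct limit.

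\textbf{Existence.} I would construct monoid homomorphisms $\bar f_n:\frakM_n\to V$ by induction on $n$. Since $\frakM_0=M(X)$ is the free monoid on $X$, the map $f$ extends uniquely to a monoid homomorphism $\bar f_0:M(X)\to V$. Given $\bar f_n$, consider the map on the alphabet $X\cup\lc\frakM_n\rc$ that sends $x\in X$ to $f(x)$ and $\lc w\rc$ (for $w\in\frakM_n$) to $Q(\bar f_n(w))$; because $\frakM_{n+1}=M(X\cup\lc\frakM_n\rc)$ is the free monoid on this alphabet, the map extends uniquely to a monoid homomorphism $\bar f_{n+1}:\frakM_{n+1}\to V$. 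The compatibility $\bar f_{n+1}\circ\uni i_n=\bar f_n$ is checked by comparing these two monoid homomorphisms on the generating set $X\cup\lc\frakM_{n-1}\rc$ of $\frakM_n$: on $x\in X$ both send $x$ to $f(x)$, and on $\lc w\rc$ with $w\in\frakM_{n-1}$ both send it to $Q(\bar f_{n-1}(w))$, the latter using the inductive compatibility $\bar f_n\circ\uni i_{n-1}=\bar f_{n-1}$. Hence the $\bar f_n$ are compatible and glue to a well-defined map $\bar f:\frakM(X)=\dirlim\frakM_n\to V$ (for $w\in\frakM(X)$ put $\bar f(w):=\bar f_n(w)$ for any $n$ with $w\in\frakM_n$); it is a monoid homomorphism since any two elements and their product lie in a common $\frakM_m$ on which $\bar f$ agrees with $\bar f_m$. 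It is operated because, for $w\in\frakM_n$, we have $\bar f(\lc w\rc)=\bar f_{n+1}(\lc w\rc)=Q(\bar f_n(w))=Q(\bar f(w))$, so $\bar f\circ\lc\ \rc=Q\circ\bar f$. Finally $\bar f\circ j_X=\bar f_0\circ j_X=f$.

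\textbf{Uniqueness.} Let $g:(\frakM(X),\lc\ \rc)\to(V,Q)$ be any operated monoid morphism with $g\circ j_X=f$. I would show by induction on $n$ that $g$ and $\bar f$ agree on $\frakM_n$. On $\frakM_0=M(X)$, $g$ restricts to a monoid homomorphism extending $f$, hence equals $\bar f_0$ by the uniqueness clause of the universal property of $M(X)$. Assuming $g=\bar f$ on $\frakM_n$, consider the generators of $\frakM_{n+1}=M(X\cup\lc\frakM_n\rc)$: for $x\in X$ we have $g(x)=f(x)=\bar f(x)$, and for $\lc v\rc$ with $v\in\frakM_n$ we have $g(\lc v\rc)=Q(g(v))=Q(\bar f(v))=\bar f(\lc v\rc)$ since $g$ is operated and by the inductive hypothesis. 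As $g$ and $\bar f$ are monoid homomorphisms agreeing on every generator of $\frakM_{n+1}$, they agree on $\frakM_{n+1}$. Since every element of $\frakM(X)$ lies in some $\frakM_n$, it follows that $g=\bar f$ on all of $\frakM(X)$.

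\textbf{Main obstacle.} There is no substantial difficulty here; this is the standard ``define on generators, then take a colimit'' argument. The one point demanding care is the bookkeeping of the embeddings $\uni i_n$: one must verify at each stage that the freshly constructed $\bar f_{n+1}$ restricts along $\uni i_n$ to $\bar f_n$ \emph{exactly} (and not merely to some homomorphism extending $f$), since it is this compatibility that makes the direct-limit map $\bar f$ well defined. That is precisely where the freeness of each $M(-)$ together with the inductive compatibility hypothesis is used. One should also keep in mind, as already noted in the construction, that the direct limit $\frakM(X)$ is itself a monoid, so that speaking of a monoid homomorphism out of it is legitimate.
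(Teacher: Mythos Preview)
Your argument is correct and is the standard verification of the universal property via the inductive construction of $\frakM(X)$. Note, however, that the paper does not actually supply a proof of this lemma: it is stated with a citation to \cite{Gop} and no proof environment follows. So there is no ``paper's own proof'' to compare against; your write-up is exactly the kind of proof one would expect in the cited reference, and nothing in it is in tension with how the paper uses the result.
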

By \cite[Theorem~4.2]{Gop}, another representation of free operated semigroups is given by rooted trees. See~\mcite{CK,Kr1} for the application of rooted trees in quantum field theory.

\subsection{Motzkin words}
We now recall the definition of Motzkin words which aquired its name since it encodes Motzkin paths~\mcite{DS}. Motzkin words give another construction of free operated monoids~\mcite{Gop} and in this paper serve as the bridge between bracketed words and the usual words.

Let $X$ be a set. Let $\lm$ and $\rtm$ be symbols not in $X$.
\begin{defn}
An element of the free monoid $M(X\cup \{\lm,\rtm\})$ is called a {\bf Motzkin word} on $X$ if it has the properties that
\begin{enumerate}
\item
the number of $\lm$ in the word equals the number of $\rtm$ in the word;
\item
counting from the left, the number of occurrence of $\lm$ is always greater or equal to the number of occurrence of $\rtm$.
\end{enumerate}
The set of all Motzkin words is denoted by $\mathcal{W}(X)$. \mlabel{defn:mword}
\end{defn}

\begin{exam}
Let $x,y,z$ be elements in $X$.
\begin{enumerate}
\item The word $\lm \lm x\rtm \lm y\rtm \rtm z$ is a Motzkin word.
\item  the word $\lm x\rtm \lm yz$ is not a Motzkin word since it does not satisfy the first property.
\item The word $x\rtm\lm y\lm z\rtm$ is not a Motzkin  word since it does not satisfy the second property.
\end{enumerate}
\end{exam}

Define
$$P:\calw(X)\rightarrow \calw(X), \quad P(m)=\lm m\rtm, m\in \calw(X).$$
Then $\calw(X)$ is an operated monoid.

\begin{theorem} {\bf\cite[Theorem 3.4]{Gop}}
Let $Y$ be a set. Let
$$\phi:=\phi_Y:\frakM(Y)\rightarrow \mathcal{W}(Y), \quad \phi(u)=u, u\in Y,$$
be the homomorphism of operated monoid defined by the universal property of the free operated monoid $\frakM(Y)$.  Then $\phi$ is  an isomorphism. \mlabel{thm:mbiso}
\end{theorem}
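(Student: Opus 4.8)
The plan is to exhibit an explicit two-sided inverse of $\phi$, built from the unique ``matched-bracket'' decomposition of a Motzkin word.

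First I would record that $\mathcal{W}(Y)$ really is an operated monoid: it is closed under concatenation inside $M(Y\cup\{\lm,\rtm\})$ (both balance conditions in Definition~\ref{defn:mword} are preserved, using that a Motzkin word has \emph{equal} numbers of $\lm$ and $\rtm$), and $P(m)=\lm m\rtm$ maps $\mathcal{W}(Y)$ into itself. Hence by the universal property of the free operated monoid $\frakM(Y)$ the operated-monoid homomorphism $\phi$ in the statement exists and is unique; in particular $\phi$ preserves products and $\phi(\lc w\rc)=\lm\phi(w)\rtm$ for every $w\in\frakM(Y)$.

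The heart of the matter is the following decomposition lemma: every $m\in\mathcal{W}(Y)$ has a unique factorization $m=a_1a_2\cdots a_k$ ($k\geq 0$) in which each \emph{atom} $a_i$ is either a letter of $Y$ or of the form $\lm m_i\rtm$ with $m_i\in\mathcal{W}(Y)$. Existence is the standard first-return argument: a nonempty Motzkin word cannot begin with $\rtm$ (that already violates the second condition), so it begins either with a letter $y\in Y$ — delete it, getting a shorter Motzkin word, and recurse — or with $\lm$, in which case one locates the first position at which the running count of $\rtm$'s catches up with that of $\lm$'s, splitting $m=\lm m_1\rtm m''$ with $m_1$ and $m''$ both Motzkin words (check the two balance conditions for each), and recurses on $m''$; the recursion terminates because the words involved are strictly shorter. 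Uniqueness holds because the choice of $a_1$ is forced: if $m$ starts with a letter $y$ then $a_1=y$, and if $m$ starts with $\lm$ then $a_1$ must equal $\lm m_1\rtm$ with $\lm m_1\rtm$ itself a Motzkin word, which pins $\rtm$ to exactly the first-return position. I would also note two compatibilities for later use: if $m=m'm''$ with $m',m''\in\mathcal{W}(Y)$ then the atom decomposition of $m$ is the concatenation of those of $m'$ and $m''$ (every $\lm$ of $m'$ is already matched inside $m'$), and the atom decomposition of $\lm m\rtm$ is the single atom $\lm m\rtm$. This lemma, with its bookkeeping about the running counts, is the one genuinely non-formal step; everything after it is routine.

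With the lemma in hand, define $\psi:\mathcal{W}(Y)\to\frakM(Y)$ by induction on length: $\psi(1)=1$, and for $m=a_1\cdots a_k$ set $\psi(m)=\psi(a_1)\cdots\psi(a_k)$ where $\psi(y)=y$ for $y\in Y$ and $\psi(\lm m_i\rtm)=\lc\psi(m_i)\rc$ (legitimate since each $m_i$ is strictly shorter). The two compatibilities give $\psi(m'm'')=\psi(m')\psi(m'')$ and $\psi(\lm m\rtm)=\lc\psi(m)\rc$, so $\psi$ is a homomorphism of operated monoids. Then $\psi\circ\phi$ is an operated-monoid endomorphism of $\frakM(Y)$ fixing $Y$ pointwise, so the uniqueness clause of the universal property forces $\psi\circ\phi=\id_{\frakM(Y)}$. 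Conversely $\phi\circ\psi=\id_{\mathcal{W}(Y)}$ follows by induction on length from the atom decomposition: $\phi(\psi(y))=y$, $\phi(\psi(\lm m_i\rtm))=\phi(\lc\psi(m_i)\rc)=\lm\phi(\psi(m_i))\rtm=\lm m_i\rtm$ by the inductive hypothesis, and hence $\phi(\psi(m))=\phi(\psi(a_1))\cdots\phi(\psi(a_k))=a_1\cdots a_k=m$. Thus $\phi$ is a bijective homomorphism of operated monoids, i.e. an isomorphism. (An alternative that never names $\psi$ is a direct double induction — on $\dep(f)$ for injectivity of $\phi$, comparing the atom decompositions of $\phi(f)$ and $\phi(g)$, and on the length of $m$ for surjectivity — but it relies on the same decomposition lemma and is no shorter, so I would present the inverse-map version.)
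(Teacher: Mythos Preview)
The paper does not actually prove this theorem: it is quoted verbatim from \cite[Theorem~3.4]{Gop} and used as a black box, with no argument given here. So there is no ``paper's own proof'' to compare against.

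That said, your proof is correct and is the standard route. The one substantive step is the unique atom decomposition of a Motzkin word (every $m\in\mathcal{W}(Y)$ factors uniquely as $a_1\cdots a_k$ with each $a_i\in Y$ or $a_i=\lm m_i\rtm$, $m_i\in\mathcal{W}(Y)$), and your first-return argument for existence and the forced-first-atom argument for uniqueness are both fine. Once that lemma is in hand, the construction of $\psi$ and the verification that $\psi\circ\phi=\id$ (via the universal property) and $\phi\circ\psi=\id$ (via length induction) are routine, as you say. This is almost certainly the same argument as in \cite{Gop}, where the free operated monoid is built precisely so that its elements have this kind of recursive bracket structure; the bijection with Motzkin words is just the observation that the atom decomposition mirrors the recursion $\frakM(Y)=M(Y\cup\lc\frakM(Y)\rc)$ in Eq.~(\ref{eq:omid}).
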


\subsection{$\vs$-bracketed words and $\vs$-Motzkin words}
Let $X$ be a set. For $k\geq 1$,
let $\star_1,\cdots, \star_k$  be distinct symbols not in $X$ and let $\vs =(\star_1,\cdots,\star_k)$. Denote $X^{\vs}=X\cup\{\star_1,\cdots,\star_k\}$.

\begin{defn}
A {\bf $\vs$-bracketed word} on $X$ is defined to be a bracketed word in the operated monoid  $\frakM(X^{\vs})$ with exactly one occurrence of $\star_i$ for each $i=1,\cdots, k$.
The set of all $\vs$-bracketed words on $X$ is denoted by $\frakM^{\vs}(X)$. When $k=1$ and $2$, we denote
$\frakM^{\vs}(X)$ by $\frakM^\star(X)$ and $\frakM^{\star_1,\star_2}(X)$ respectively.
\end{defn}

\begin{defn}
For $q\in \frakM^{\vs}(X)$ and $s_1,\cdots,s_k\in \frakM(X)$, we define
\begin{equation}
q|_{\vec{s}}:=q|_{s_1,\cdots,s_k}
\mlabel{eq:2star1}
\end{equation}
to be the bracketed word in $\frakM(X)$ obtained by replacing the letter $\star_i$ in $q$ by $s_i$ for $1\leq i\leq k$.  A {\bf $(s_1,\cdots, s_k)$-bracketed word} on $X$ is a bracketed word of the form Eq.\,(\mref{eq:2star1}) for some $q \in \frakM^{\vs}(X)$.
\end{defn}

We next introduce the concept of a $\vs$-Motzkin word on $X$.

\begin{defn}
A word in $M(X^{\vs}\cup\{\lm,\rtm\})$ is called a {\bf $\vs$-Motzkin word} on $X$ if it is in the intersection $M^{\vs}(X\cup\{\lm,\,\rtm\})\cap \calw(X^{\vs})$ or, more precisely, if it has the properties that
\begin{enumerate}
\item
the number of $\lm$ in the word equals the number of $\rtm$ in the word;
\item
counting from the left, the number of occurrence of $\lm $ is always greater or equal to the number of occurrence of $\rtm$;
\item
for each $1\leq i\leq k$, the letter $\star_i$ appears exactly once in the word.
\end{enumerate}
The set of all  $\vs$-Motzkin words is denoted by $\mathcal{W}^{\vs}(X)$.\mlabel{defn:smword}
\end{defn}

Taking $Y=X^{\vs}$ in Theorem~\mref{thm:mbiso}, we obtain
\begin{coro}
There is a unique isomorphism
$$\phi_{X^{\vs}}:\frakM(X^{\vs})
\rightarrow \mathcal{W}(X^{\vs})$$
of operated monoids such that $\phi_{X^{\vs}}(a)=a$ for all $a\in X^{\vs}$. In particular, $\phi_{X^{\vs}}(\star_i)=\star_i$ for $1\leq i\leq k$.
\mlabel{co:sbm}
\end{coro}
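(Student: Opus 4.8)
The plan is to obtain this as an immediate instance of Theorem~\ref{thm:mbiso} with the substitution $Y = X^{\vs} = X \cup \{\star_1,\cdots,\star_k\}$, and then to verify the one extra assertion, namely that the resulting isomorphism actually carries $\mathcal{W}(X^{\vs})$ into the subset of words we wish to identify, in particular that it fixes each $\star_i$. First I would simply apply Theorem~\ref{thm:mbiso} verbatim with the alphabet $Y$ replaced by $X^{\vs}$: since $X^{\vs}$ is a perfectly legitimate set, the theorem yields a homomorphism $\phi_{X^{\vs}} : \frakM(X^{\vs}) \to \calw(X^{\vs})$ of operated monoids, determined by the universal property of the free operated monoid $\frakM(X^{\vs})$ applied to the map sending each $a \in X^{\vs}$ to $a \in \calw(X^{\vs})$, and this $\phi_{X^{\vs}}$ is an isomorphism.

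Next I would record uniqueness and the statement $\phi_{X^{\vs}}(a) = a$ for all $a \in X^{\vs}$. Uniqueness is part of the universal property: any morphism of operated monoids $\frakM(X^{\vs}) \to \calw(X^{\vs})$ restricting to the identity on the generating set $X^{\vs}$ must coincide with $\bar f$ where $f : X^{\vs} \hookrightarrow \calw(X^{\vs})$ is the inclusion; since Theorem~\ref{thm:mbiso} constructs exactly this $\bar f$ and shows it is bijective, there is no other candidate. The assertion $\phi_{X^{\vs}}(a) = a$ is literally the defining property $f = \bar f \circ j_{X^{\vs}}$ (identifying $X^{\vs}$ with its image under $j_{X^{\vs}}$ inside $\frakM(X^{\vs})$, as is done throughout for the free operated monoid construction). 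The ``in particular'' clause $\phi_{X^{\vs}}(\star_i) = \star_i$ is then just the special case $a = \star_i$, using that each $\star_i$ is by hypothesis an element of $X^{\vs}$.

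There is essentially no obstacle here: the corollary is a direct specialization, and the only mild point requiring a sentence of care is the customary abuse of notation whereby elements of the generating set $X^{\vs}$ are regarded simultaneously as elements of $\frakM(X^{\vs})$ (via $j_{X^{\vs}}$) and of $\calw(X^{\vs})$ (via the inclusion of the alphabet into the free monoid $M(X^{\vs} \cup \{\lm, \rtm\})$), so that the equation $\phi_{X^{\vs}}(\star_i) = \star_i$ makes sense. I would note that under this identification the subset $\calw^{\vs}(X) \subseteq \calw(X^{\vs})$ of $\vs$-Motzkin words (Definition~\ref{defn:smword}) corresponds under $\phi_{X^{\vs}}$ precisely to $\frakM^{\vs}(X) \subseteq \frakM(X^{\vs})$, since $\phi_{X^{\vs}}$ being a monoid isomorphism preserves the number of occurrences of each letter $\star_i$; this observation is what makes the corollary useful in Section~\ref{sec:relb}, and it can be mentioned in passing even though the bare statement above only claims the isomorphism on the ambient monoids.
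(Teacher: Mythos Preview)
Your proposal is correct and follows exactly the paper's approach: the corollary is obtained simply by taking $Y=X^{\vs}$ in Theorem~\ref{thm:mbiso}, and the paper's proof says no more than that. Your additional remarks on uniqueness and on the restriction to $\frakM^{\vs}(X)$ are fine but go beyond what is needed here (the latter is precisely the content of the subsequent Proposition~\ref{prop:sbij}).
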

\begin{prop}
The isomorphism $\phi_{X^{\vs}}$ restricts to a bijection $$\phi_{\vs}:\frakM^{\vs}(X) \rightarrow
\mathcal{W}^{\vs}(X).$$
\mlabel{prop:sbij}
\end{prop}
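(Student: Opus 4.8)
The plan is to bootstrap off Corollary~\mref{co:sbm}. Since $\phi_{X^{\vs}}$ is already known there to be a bijection from $\frakM(X^{\vs})$ onto $\mathcal{W}(X^{\vs})$, it suffices to prove that for every $w\in\frakM(X^{\vs})$ one has $w\in\frakM^{\vs}(X)$ if and only if $\phi_{X^{\vs}}(w)\in\mathcal{W}^{\vs}(X)$; the restriction $\phi_{\vs}$ will then automatically be a bijection onto $\mathcal{W}^{\vs}(X)$. Now $\phi_{X^{\vs}}(w)$ already lies in $\mathcal{W}(X^{\vs})$, and by Definition~\mref{defn:smword} we have $\mathcal{W}^{\vs}(X)=M^{\vs}(X\cup\{\lm,\rtm\})\cap\mathcal{W}(X^{\vs})$, so $\phi_{X^{\vs}}(w)\in\mathcal{W}^{\vs}(X)$ is equivalent to the single requirement that each letter $\star_i$ occurs exactly once in the word $\phi_{X^{\vs}}(w)$. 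On the other side, $w\in\frakM^{\vs}(X)$ means precisely that each $\star_i$ occurs exactly once in the bracketed word $w$, occurrences nested inside brackets included. Thus the whole proposition reduces to one counting identity: for each $i$, the number of occurrences of $\star_i$ in $w$ equals the number of occurrences of the letter $\star_i$ in $\phi_{X^{\vs}}(w)$.

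To establish this identity I would express both counts as morphisms of operated monoids into $(\NN,\id)$, where $\NN$ is the additive monoid with the identity operator. Fix $i$. Let $N_i\colon\frakM(X^{\vs})\to(\NN,\id)$ be the morphism obtained from the universal property of $\frakM(X^{\vs})$ applied to the map $X^{\vs}\to\NN$ sending $\star_i$ to $1$ and every other element of $X^{\vs}$ to $0$; since $N_i\circ\lc\ \rc=\id\circ N_i=N_i$, the value $N_i(w)$ is exactly the count of $\star_i$ in $w$ with nested occurrences included (this is what that count means, made precise). On the Motzkin side let $\nu_i\colon M(X^{\vs}\cup\{\lm,\rtm\})\to\NN$ be the monoid homomorphism sending $\star_i$ to $1$ and all other generators, including $\lm$ and $\rtm$, to $0$; then $\nu_i(m)$ counts the letter $\star_i$ in $m$, and $\nu_i(\lm m\rtm)=\nu_i(m)$.

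The key step is to identify $N_i$ with $\nu_i\circ\phi_{X^{\vs}}$. Since $\phi_{X^{\vs}}$ is a morphism of operated monoids from $(\frakM(X^{\vs}),\lc\ \rc)$ to $(\mathcal{W}(X^{\vs}),P)$ with $P(m)=\lm m\rtm$, and $\nu_i\circ P=\nu_i$, the composite $\nu_i\circ\phi_{X^{\vs}}$ is again a morphism of operated monoids $(\frakM(X^{\vs}),\lc\ \rc)\to(\NN,\id)$, and it agrees with $N_i$ on the generating set $X^{\vs}$, because $\phi_{X^{\vs}}$ fixes $X^{\vs}$ pointwise (Corollary~\mref{co:sbm}) while $N_i$ and $\nu_i$ were assigned the same values there. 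By the uniqueness clause in the universal property of the free operated monoid $\frakM(X^{\vs})$, $\nu_i\circ\phi_{X^{\vs}}=N_i$, which is precisely the counting identity. Feeding it back: $w\in\frakM^{\vs}(X)$ iff $N_i(w)=1$ for all $i$ iff $\nu_i(\phi_{X^{\vs}}(w))=1$ for all $i$ iff $\phi_{X^{\vs}}(w)\in\mathcal{W}^{\vs}(X)$, which finishes the argument.

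I do not expect a genuine obstacle; the only point that wants care is the bookkeeping for the meaning of ``exactly one occurrence of $\star_i$'' in a bracketed word, namely that occurrences buried inside nested brackets must be counted and that $N_i$ faithfully records this. A reader preferring an elementary route may replace the universal-property step by an induction on the depth $\dep(w)$: writing $w$ as a product of letters of $X^{\vs}$ and of bracketed terms $\lc v\rc$ with $\dep(v)<\dep(w)$, and using that $\phi_{X^{\vs}}$ is a morphism of operated monoids so that $\phi_{X^{\vs}}(\lc v\rc)=\lm\phi_{X^{\vs}}(v)\rtm$, the counting identity drops out of the inductive hypothesis. In either form the real content is supplied by Theorem~\mref{thm:mbiso} and Corollary~\mref{co:sbm}, and the rest is formal.
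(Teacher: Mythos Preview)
Your proposal is correct and follows essentially the same approach as the paper: both reduce the question to the observation that $\phi_{X^{\vs}}$ preserves the number of occurrences of each $\star_i$, since $\phi_{X^{\vs}}(\star_i)=\star_i$. The paper dispatches this in a single sentence, whereas you supply a careful formal justification via the universal property (with an alternative depth induction); the extra rigor is fine but not needed beyond what the paper already assumes.
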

\begin{proof}
Since the bijection $\phi_{X^{\vs}}$ sends $\star_i$ to $\star_i, 1\leq i\leq k$, an element $w\in \frakM(X^{\vs})$ is in
$\frakM^{\vs}(X)$ if and only if $\phi_{X^{\vs}}(w)\in \calw(X^{\vs})$ is in $\calw^{\vs}(X)$, hence the proposition.
\end{proof}

\section{Relative locations in bracketed words and Motzkin words}
\mlabel{sec:relb}
In this section, we first define the relative locations of bracketed words and Motzkin words by using $\vs$-bracketed words and $\vs$-Motzkin words in Section~\mref{subsec:obra}. The relationship between the two kinds of relative locations is established in Section~\mref{subsec:place} and is used to prove the main theorem in Sections~\mref{subsec:pla}.

\subsection{\Plas  in bracketed words and Motzkin words}
\mlabel{subsec:obra}

\begin{defn}Let $X$ be a set. Let $f, s$ be in $\frakM(X)$. A {\bf \pla} of $s$ in $f$ (by $q$) is a pair $\pl:=(s,q)$ with $q\in\frakM^{\star}(X)$ such that $f=q|_s$. We then call $s$ a {\bf bracketed  subword of $f$}.
\mlabel{defn:brword}
\end{defn}

The relative locations between two \plas in a bracketed word can now be defined in the same way as those in an ordinary word.

\begin{defn}
Two \plas $(s_1,q_1)$ and $(s_2,q_2)$ in $f\in \frakM(X)$ are  called
\begin{enumerate}
\item
{\bf separated} if there exists an element $q$ in $\frakM^{\star_1,\star_2}(X)$ such that $q_1|_{\star_1}=q|_{\star_1,s_2}$, $q_2|_{\star_2}=q|_{s_1, \star_2}$ and $f=q|_{s_1,s_2}$;
\mlabel{item:bsep}
\item
{\bf nested} if there exists an element $q$ in $\frakM^{\star}(X)$ such that $q_1=q_2|_q$ or $q_2=q_1|_q$;
\mlabel{item:bnes}
\item
 {\bf intersecting} if there exist an element $q$ in $\frakM^{\star}(X)$ and elements $a,b,c$ in $\frakM(X)\backslash\{1\}$ such that
\begin{enumerate}
\item
either $f=q|_{abc}, q_1=q|_{\star c}, q_2=q|_{a\star}$;
\item
or $f=q|_{abc}, q_1=q|_{a\star}, q_2=q|_{\star c}$.
\end{enumerate}
\mlabel{item:bint}
\end{enumerate}
\mlabel{defn:bwrel}
\end{defn}

A bracketed word $s$ might appear in a bracketed word $f$ with multiple \plas with different relative locations with respect to another bracketed subword.

\begin{exam}
Let $f=\lc \lc abc\rc ab\rc\in \frakM(X)$. Let $s_1=\lc abc\rc$ and let $s_2=ab$. Then we obtain the \plas $(s_1,q_1)$, $(s_2, q_{21})$ and $(s_2,q_{22})$, where $q_1=\lc \star ab\rc$, $q_{21}=\lc \lc\star c\rc ab\rc$ and $q_{22}=\lc \lc abc\rc \star\rc$. Then the \plas $(s_1,q_1)$ and $(s_2,q_{21})$ are nested since $q_{21}=q_1|_{q}$, where $q=\lc \star c\rc$. However the \plas $(s_1,q_1)$ and $(s_2,q_{22})$ are separated.\mlabel{exam:ex1}
\end{exam}

We next describe the relative locations of two \plas of bracketed words in terms of Motzkin words.
By Definition~\mref{defn:mword}, we obtain
\begin{lemma}
Let $u$ be in $\calw(X)$ and $p$ be in $\calw^\star(X)$. Then $p|_u$ is in $\calw(X)$.
\mlabel{lem:msub}
\end{lemma}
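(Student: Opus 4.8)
The plan is to reduce the statement to elementary bookkeeping on the numbers of occurrences of the bracket symbols $\lm$ and $\rtm$. Since $p\in\calw^\star(X)$ contains $\star$ exactly once, I would first write $p=p_1\star p_2$ with $p_1,p_2\in M(X\cup\{\lm,\rtm\})$, so that $p|_u=p_1up_2$. Because $u\in\calw(X)\subseteq M(X\cup\{\lm,\rtm\})$ and $p_1,p_2$ contain no $\star$, this is indeed a word in $M(X\cup\{\lm,\rtm\})$, and it remains to verify the two defining conditions of a Motzkin word in Definition~\mref{defn:mword}.

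For the balance condition, note that $\star$ contributes no bracket symbols, so the number of $\lm$ in $p|_u$ equals the sum of those in $p_1$, $u$ and $p_2$, and likewise for $\rtm$. The contributions of $p_1$ and $p_2$ together balance because $p$, viewed as an element of $\calw(X\cup\{\star\})$, has equally many $\lm$'s and $\rtm$'s; the contribution of $u$ balances because $u\in\calw(X)$. Hence $p|_u$ has equally many $\lm$'s as $\rtm$'s.

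For the prefix condition, I would take an arbitrary prefix $v$ of $p|_u=p_1up_2$ and distinguish three cases: $v$ is a prefix of $p_1$; or $v=p_1u'$ with $u'$ a prefix of $u$; or $v=p_1up_2'$ with $p_2'$ a prefix of $p_2$. In the first case $v$ is also a prefix of $p$, so the inequality ``number of $\lm$ $\geq$ number of $\rtm$'' holds by the prefix property of $p$. In the second case the inequality for $v$ is obtained by adding the prefix inequality for $p_1$ (a prefix of $p$, with $\star$ irrelevant to bracket counts) and that for $u'$ (a prefix of $u$). In the third case $p_1\star p_2'$ is a prefix of $p$, so the prefix property of $p$ gives the inequality for $p_1p_2'$, and adding the equality of the $\lm$- and $\rtm$-counts of $u$ yields the inequality for $v=p_1up_2'$. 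This establishes $p|_u\in\calw(X)$.

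The whole argument is routine; the only place needing slight care is the three-way case split in the prefix condition, and in particular remembering to invoke the balancedness of $u$ in the third case. I anticipate no genuine obstacle.
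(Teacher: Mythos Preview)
Your argument is correct and follows exactly the route the paper intends: the paper states the lemma with the phrase ``By Definition~\mref{defn:mword}, we obtain'' and gives no further proof, so your direct verification of the two Motzkin conditions via the decomposition $p=p_1\star p_2$ is precisely the omitted detail. The three-way case split for the prefix condition is handled correctly, including the use of the balancedness of $u$ in the third case.
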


Taking $Z=X\cup\{\lm,\rtm\}$ in Definition~\mref{defn:occst}, we have
\begin{defn}
Let $w$ and $u$ be Motzkin words in $\mathcal{W}(X)$. A {\bf \pla} of $u$ in $w$ (by $p$) is a pair $(u,p)$ with $p\in \calw^\star(X)$ such that $p|_u=w$. We then call $u$ a {\bf Motzkin subword} of $w$.
\mlabel{defn:occm}
\end{defn}

By Definition~\mref{defn:mword}, the set $\mathcal{W}^{\star_1,\star_2}(X)$ of $(\star_1,\star_2
)$-Motzkin words is a subset of  $M^{\star_1,\star_2}(X\cup\{\lm,\rtm\})$. Thus as a special case of Definition~\mref{defn:smm}, we obtain the definition of two \plas $(u_1,p_1)$ and $(u_2,p_2)$ in $w$ being {\bf separated} or {\bf nested} or {\bf intersecting}.

\subsection{Relationship between relative locations}
\mlabel{subsec:place}

We now establish the relationship between \plas in bracketed words and \plas in Motzkin words.
\begin{prop}
Let $X$ be a set and let $\{\star_1,\cdots,\star_k\}$ be symbols not in $X$.
Let $\phi:\frakM(X)\rightarrow \mathcal{W}(X)$ be the isomorphism of operated monoids in Theorem ~\mref{thm:mbiso}, and let $\phi_{\vs}: \frakM^{\vs}(X)\rightarrow \mathcal{W}^{\vs}(X)$ be the bijection in Proposition~\mref{prop:sbij}.
\begin{enumerate}
\item
Let $q\in \frakM^{\vs}(X)$ and $s_1,\cdots,s_k\in \frakM(X)$. Then
$\phi(q|_{s_1,\cdots,s_k}) =\phi_{\vs}(q)|_{\phi(s_1),\cdots,\phi(s_k)}$.
\mlabel{item:bmphi}
\item
Let $p\in \calw^{\vs}(X)$ and $u_1,\cdots,u_k\in \calw(X)$. Then $\phi^{-1}(p|_{u_1,\cdots,u_k})
=\phi_{\vs}^{-1}(p)|_{\phi^{-1}(u_1),\cdots,\phi^{-1}(u_k)}\,$.
\mlabel{item:mbphi}
\end{enumerate}
\mlabel{prop:bmrel}
\end{prop}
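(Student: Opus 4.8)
The plan is to reduce both parts to the single statement that $\phi$, being a morphism of operated monoids, commutes with the substitution operation $|_{\cdots}$, and then to note that part (b) is merely part (a) read backwards through the bijections. So I would prove (a) first and get (b) essentially for free.

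For part (a), the key observation is that the substitution $q|_{s_1,\dots,s_k}$ can be realized as the image of $q$ under a morphism of operated monoids. First I would set up the following: let $\overline{f}\colon\frakM(X^{\vs})\to\frakM(X)$ be the unique operated monoid morphism extending the map $X^{\vs}\to\frakM(X)$ that fixes each $a\in X$ and sends $\star_i\mapsto s_i$; this exists by the universal property of the free operated monoid $\frakM(X^{\vs})$ (Lemma~\mref{pp:freetm} applied with the set $X^{\vs}$). By construction, for $q\in\frakM^{\vs}(X)\subseteq\frakM(X^{\vs})$ we have $\overline{f}(q)=q|_{s_1,\dots,s_k}$, since replacing each $\star_i$ by $s_i$ and fixing everything else is exactly what this morphism does on generators, and substitution respects the monoid product and the bracket. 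Symmetrically, let $\overline{g}\colon\mathcal{W}(X^{\vs})\to\mathcal{W}(X)$ be the unique operated monoid morphism extending $a\mapsto a$ ($a\in X$), $\star_i\mapsto\phi(s_i)$; then $\overline{g}(p)=p|_{\phi(s_1),\dots,\phi(s_k)}$ for $p\in\mathcal{W}^{\vs}(X)$, again because substitution into a Motzkin word is a morphism of operated monoids (using Lemma~\mref{lem:msub} to see substitution stays inside $\mathcal{W}(X)$, and Theorem~\mref{thm:mbiso} with $Y=X^{\vs}$ for the universal property on the Motzkin side).

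Now I would compare the two composites $\mathcal{W}(X^{\vs})$-side: consider $\overline{g}\circ\phi_{X^{\vs}}$ and $\phi\circ\overline{f}$, both morphisms of operated monoids from $\frakM(X^{\vs})$ to $\mathcal{W}(X)$. They agree on the generating set $X^{\vs}$: on $a\in X$ both send $a\mapsto a\mapsto a$ and $a\mapsto a\mapsto\phi(a)=a$ (using $\phi(a)=a$, Theorem~\mref{thm:mbiso}); on $\star_i$ the first sends $\star_i\mapsto\star_i\mapsto\phi(s_i)$ (using Corollary~\mref{co:sbm}, $\phi_{X^{\vs}}(\star_i)=\star_i$), the second sends $\star_i\mapsto s_i\mapsto\phi(s_i)$. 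By the uniqueness clause in the universal property of $\frakM(X^{\vs})$, the two composites are equal. Evaluating this equality at $q\in\frakM^{\vs}(X)$ gives $\overline{g}(\phi_{X^{\vs}}(q))=\phi(\overline{f}(q))$, i.e.\ $\overline{g}(\phi_{\vs}(q))=\phi(q|_{s_1,\dots,s_k})$ since $\phi_{X^{\vs}}$ restricts to $\phi_{\vs}$ on $\frakM^{\vs}(X)$ (Proposition~\mref{prop:sbij}); and $\overline{g}(\phi_{\vs}(q))=\phi_{\vs}(q)|_{\phi(s_1),\dots,\phi(s_k)}$ by the identification of $\overline{g}$ with substitution. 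This is exactly (a).

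For part (b), I would simply apply $\phi^{-1}$ to (a): given $p\in\mathcal{W}^{\vs}(X)$ and $u_1,\dots,u_k\in\mathcal{W}(X)$, set $q=\phi_{\vs}^{-1}(p)$ and $s_i=\phi^{-1}(u_i)$; then (a) reads $\phi(q|_{s_1,\dots,s_k})=p|_{u_1,\dots,u_k}$, and applying $\phi^{-1}$ yields $\phi^{-1}(p|_{u_1,\dots,u_k})=q|_{s_1,\dots,s_k}=\phi_{\vs}^{-1}(p)|_{\phi^{-1}(u_1),\dots,\phi^{-1}(u_k)}$, as claimed. The main obstacle — really the only nontrivial point — is the first paragraph's claim that substitution $q\mapsto q|_{\vec s}$ genuinely coincides with the induced operated-monoid morphism $\overline{f}$; this needs a brief induction on depth (or on word length) to check that $\overline{f}$ acts on a product as the product of the $\overline{f}$-images and on $\lc w\rc$ as $\lc\overline{f}(w)\rc$, which is precisely how substitution is defined recursively. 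Everything after that is a formal diagram chase powered by the uniqueness half of the universal property.
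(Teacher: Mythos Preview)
Your proposal is correct, and part (\mref{item:mbphi}) is handled identically to the paper (apply $\phi^{-1}$ to part (\mref{item:bmphi})), but your argument for part (\mref{item:bmphi}) takes a genuinely different route from the paper's. The paper proceeds by a direct structural induction on the depth of $q$: it first enlarges to the set $\overline{\frakM}^{\,\vs}(X)$ of bracketed words in which each $\star_i$ appears at most once (so that factors of $q$ remain in the domain), then in the inductive step uses multiplicativity of $\phi$ and $\overline{\phi}_{\vs}$ to reduce to indecomposable $q$, which is either a generator or of the form $\lc\tilde{q}\rc$ with $\tilde{q}$ of smaller depth. Your approach instead packages substitution as the unique operated-monoid morphism $\overline{f}\colon\frakM(X^{\vs})\to\frakM(X)$ determined on generators, does the same on the Motzkin side, and then obtains the identity by comparing the two composites $\phi\circ\overline{f}$ and $\overline{g}\circ\phi_{X^{\vs}}$ on generators and invoking the uniqueness clause of the universal property. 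This is cleaner and more conceptual: the induction is absorbed once and for all into the universal property, and no ad hoc enlargement of the domain is needed. The trade-off, which you correctly flag, is that one must still verify that the paper's informally defined substitution $q\mapsto q|_{\vec s}$ really coincides with $\overline{f}$ on $\frakM^{\vs}(X)$; that verification is itself a short induction on depth, so in effect your argument relocates rather than eliminates the induction, but it isolates it as a one-line compatibility check rather than the spine of the proof.
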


\begin{proof}
\noindent
(\mref{item:bmphi})
We prove a more general result.
Define
$$ \overline{\frakM}^{\,\vs}(X):=\left\{ q\in \frakM(X^{\vs})\,\big|\, \text{ for each } 1\leq i\leq k, \star_i \text{ appears at most once in } q\right\}\subseteq \frakM(X^{\vs}).$$
For $q\in \overline{\frakM}^{\,\vs}(X)$ and $s_1,\cdots,s_k\in \frakM(X)$, define $q|_{s_1,\cdots,s_k}$ to be the bracketed word by replacing the $\star_i$ in $q$, if there is any, by $s_i$, $1\leq i\leq k$. Thus in particular, $q|_{s_1,\cdots,s_k}=q|_{s_1}$ if $q\in \frakM^{\star_1}(X)$ and $q|_{s_1,\cdots,s_k}=q$ if $q\in \frakM(X)$. In general, $q|_{s_1,\cdots,s_k}=q|_{s_{i_1},\cdots,s_{i_m}}$ if only $\star_{i_1},\cdots,\star_{i_m}$ from $\{\star_1,\cdots,\star_k\}$ appear in $q$.

Similarly define
$$ \overline{\calw}^{\,\vs}(X):=\left\{ p\in \calw(X^{\vs})\,\big|\, \text{ for each } 1\leq i\leq k, \star_i \text{ appears at most once in } p\right\}\subseteq \calw(X^{\vs}),$$
and define $p|_{u_1,\cdots,u_k}$ for $p\in \overline{\calw}^{\,\vs}(X)$ and $u_1,\cdots,u_k\in \calw(X)$.

With these notations, if $q$ is in $\overline{\frakM}^{\,\vs}(X)$ and $q=q_1q_2$ with $q_1, q_2\in \overline{\frakM}^{\,\vs}(X)$, then we have
$q|_{\vec{s}}=q_1|_{\vec{s}}\,q_2|_{\vec{s}}.$
Further the bijection $\phi_{X^{\vs}}$ in Corollary~\mref{co:sbm} restricts to a bijection $$\overline{\phi}_{\vs}:\overline{\frakM}^{\,\vs}(X)\to \overline{\calw}^{\,\vs}(X)$$
by the same argument as Proposition~\mref{prop:sbij}.

\begin{claim}
Let $q\in \overline{\frakM}^{\,\vs}(X)$ and $s_1,\cdots,s_k\in \frakM(X)$. Then
$\phi(q|_{s_1,\cdots,s_k}) =\overline{\phi}_{\vs}(q)|_{\phi(s_1),\cdots,\phi(s_k)}$.
\mlabel{cl:bmphi}
\end{claim}
\begin{proof}
We prove the claim by induction on the depth of $q\in \overline{\frakM}^{\,\vs}(X)$ defined in Eq.~(\mref{eq:dep}).

If the depth of $q$ is 0, then $q\in \overline{M}^{\,\vs}(X)$ and hence
$q=z_1\star_{i_1}z_2\star_{i_2}\cdots z_m\star_{i_m} z_{m+1}$ with $z_1,\cdots, z_{m+1}\in
M(X)$ and $\star_{i_1},\cdots,\star_{i_m}\in \{\star_1,\cdots,\star_k\}$. Then we have
$$ \phi(q|_{\vec{s}}) =\phi(z_1s_{i_1}\cdots z_m s_{i_m} z_{m+1})
= \phi(z_1)\phi(s_{i_1})\cdots \phi(z_m)\phi(s_{i_m})\phi(z_{m+1}) =\overline{\phi}_{\vs}(q)|_{\phi(\vec{s})},$$
as needed. Here we have used the abbreviations $\vec{s}=(s_1,\cdots,s_k)$ and $\phi(\vec{s})=(\phi(s_1),\cdots,\phi(s_k))$.

Assume that the claim has been proved for $q\in \overline{\frakM}^{\,\vs}(X)$ with depth less or equal to $n\geq 0$ and consider a $q$ with depth $n+1$.

Note that $\phi$ and $\phi_{X^{\vs}}$ (and hence $\overline{\phi}_{\vs}$) are multiplicative. Thus for $q=q_1q_2$ with $q, q_1, q_2\in \overline{\frakM}^{\,\vs}(X)$, if we can prove that $\phi(q_i|_{\vec{s}}) =\overline{\phi}_{\vs}(q_i)|_{\phi(\vec{s})}, i=1,2$, then we also have
$$ \phi(q|_{\vec{s}}) =\phi(q_1|_{\vec{s}})\phi(q_2|_{\vec{s}}) =\overline{\phi}_{\vs}(q_1)|_{\phi(\vec{s})} \overline{\phi}_{\vs}(q_2)|_{\phi(\vec{s})} =(\overline{\phi}_{\vs}(q_1)\overline{\phi} _{\vs}(q_2))|_{\phi(\vec{s})}
=\overline{\phi}_{\vs}(q)|_{\phi(\vec{s})}.$$
Therefore, we only need to complete the induction when $q$ is of depth $n+1$ and is indecomposable in $\overline{\frakM}^{\,\vs}(X)$, that is, $q$ is not the product of two elements in $\overline{\frakM}^{\,\vs}(X)\subseteq \frakM(X^{\vs})=M(X^{\vs}\cup \lc\frakM(X^{\vs})\rc)$ (see Eq.~(\mref{eq:omid})). Then $q$ is either in $X\cup \{\star_1,\cdots,\star_k\}$ or is of the form $\lc \tilde{q}\rc$ with $\tilde{q}\in \overline{\frakM}^{\,\vs}(X)$.
In the first case, $q$ is of depth 0 and has been proved above. In the second case, $\tilde{q}$ has depth $n$. Note that $\phi$ and $\phi_{\vs}$ (and hence $\overline{\phi}_{\vs}$) are compatible with the brackets. So together with the induction hypothesis, we have
$$ \phi(q|_{\vec{s}})= \phi(\lc \tilde{q}\rc|_{\vec{s}}) = \phi(\lc \tilde{q}|_{\vec{s}}\rc) = \lc \phi(\tilde{q}|_{\vec{s}})\rc =\lc \overline{\phi}_{\vs}(\tilde{q})|_{\phi(\vec{s})}\rc =
(\lc \overline{\phi}_{\vs}(\tilde{q})\rc )|_{\phi(\vec{s})}
= (\overline{\phi}_{\vs}(\lc\tilde{q}\rc) )|_{\phi(\vec{s})} = \overline{\phi}_{\vs}(q)|_{\phi(\vec{s})}.$$
This completes the inductive proof of the claim.
\end{proof}

Then Item~(\mref{item:bmphi}) is a special case of the claim since the restriction of $\overline{\phi}_{\vs}$ to $\frakM^{\vs}(X)$ is $\phi_{\vs}$.

\smallskip

\noindent
(\mref{item:mbphi})
Denote $\vec{u}=(u_1,\cdots, u_k)$ and $\phi^{-1}(\vec{u})=(\phi^{-1}(u_1),\cdots,\phi^{-1}(u_k))$. By Item~(\mref{item:bmphi}) and the bijectivity of $\phi$ and $\phi_{\vs}$, we have
$$ \phi^{-1}(p|_{\vec{u}})
= \phi^{-1}(\phi_{\vs}(\phi^{-1}_{\vs}(p)) |_{\phi(\phi^{-1}(\vec{u}))})
=\phi^{-1}\left( \phi\left(\phi^{-1}_{\vs}(p)|_{\phi^{-1}(\vec{u})}\right)\right) =\phi^{-1}_{\vs}(p)|_{\phi^{-1}(\vec{u})}.$$
This is what we need.
\end{proof}

By Proposition~\mref{prop:bmrel}, we immediately have

\begin{coro}
Let $f$ be in $\frakM(X)$. Then $(s,q)$ is a \pla of $s$ in $f$
if and only if $(\phi(s),\phi_\star(q))$ is a \pla of $\phi(s)$ in $\phi(f)\in\mathcal{W}(X)$.
\mlabel{coro:eqrel}
\end{coro}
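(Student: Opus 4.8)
The plan is to derive Corollary~\mref{coro:eqrel} directly from Proposition~\mref{prop:bmrel} in the case $k=1$, so that $\vs = (\star)$, $\phi_{\vs} = \phi_\star$, and the substitution $q|_{s_1,\cdots,s_k}$ becomes the single substitution $q|_s$. The statement is a biconditional about the two properties ``$f = q|_s$'' and ``$\phi(f) = \phi_\star(q)|_{\phi(s)}$'', so the whole proof reduces to showing these two equations are equivalent given the bijectivity of $\phi$ and $\phi_\star$.

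First I would handle the forward direction. Suppose $(s,q)$ is a \pla of $s$ in $f$, i.e. $q \in \frakM^\star(X)$ and $f = q|_s$. Since $\phi_\star: \frakM^\star(X) \to \calw^\star(X)$ is the bijection of Proposition~\mref{prop:sbij}, we have $\phi_\star(q) \in \calw^\star(X)$, and $\phi(s) \in \calw(X)$ since $\phi$ maps $\frakM(X)$ onto $\calw(X)$. Applying $\phi$ to both sides of $f = q|_s$ and invoking Proposition~\mref{prop:bmrel}(\mref{item:bmphi}) with $k=1$ gives $\phi(f) = \phi(q|_s) = \phi_\star(q)|_{\phi(s)}$, which by Definition~\mref{defn:occm} (taking $Z = X \cup \{\lm,\rtm\}$) says exactly that $(\phi(s), \phi_\star(q))$ is a \pla of $\phi(s)$ in $\phi(f)$.

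For the converse I would run the symmetric argument using part (\mref{item:mbphi}) of the proposition. Suppose $(\phi(s), \phi_\star(q))$ is a \pla of $\phi(s)$ in $\phi(f)$, so that $\phi_\star(q) \in \calw^\star(X)$ and $\phi(f) = \phi_\star(q)|_{\phi(s)}$. Since $\phi_\star$ is a bijection, $q = \phi_\star^{-1}(\phi_\star(q)) \in \frakM^\star(X)$, so it makes sense to speak of $q|_s$. Now apply $\phi^{-1}$ to $\phi(f) = \phi_\star(q)|_{\phi(s)}$; by Proposition~\mref{prop:bmrel}(\mref{item:mbphi}) with $p = \phi_\star(q)$ and $u_1 = \phi(s)$, this yields $f = \phi^{-1}(\phi_\star(q)|_{\phi(s)}) = \phi_\star^{-1}(\phi_\star(q))|_{\phi^{-1}(\phi(s))} = q|_s$, so $(s,q)$ is a \pla of $s$ in $f$.

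There is no real obstacle here: the corollary is a formal consequence of the proposition together with the bijectivity of $\phi$ and $\phi_\star$, and the only thing to be careful about is bookkeeping — matching the general $\vs$ notation of Proposition~\mref{prop:bmrel} to the single-$\star$ special case, and noting that $\phi(\frakM(X)) = \calw(X)$ and $\phi_\star(\frakM^\star(X)) = \calw^\star(X)$ so that all the substitutions $q|_s$ and $\phi_\star(q)|_{\phi(s)}$ are legitimately defined. One could even compress both directions into a single chain of equivalences: $f = q|_s \iff \phi(f) = \phi(q|_s) = \phi_\star(q)|_{\phi(s)}$, where the first step is the injectivity of $\phi$ and the second is Proposition~\mref{prop:bmrel}(\mref{item:bmphi}).
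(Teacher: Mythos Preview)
Your proof is correct and follows the same approach as the paper: both derive the corollary directly from Proposition~\mref{prop:bmrel} with $k=1$ together with the bijectivity of $\phi$. The paper's version is just the compressed chain of equivalences you describe in your final paragraph.
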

\begin{proof}
The pair $(s,q)$ is a \pla of $s$ in $f$ if and only if $f=q|_s$, which holds if and only if  $\phi(f)=\phi_\star(q)|_{\phi(s)}$, which holds if and only if $(\phi(s),\phi_\star(q))$ is a \pla of $\phi(s)$ in $\phi(f)$.
\end{proof}
We also have
\begin{coro}
Let $q_1,q_2$ and $q$ be $\vs$-bracketed words in $\frakM^{\star}(X)$.  Then $q_1=q_2|_q$ if and only if  $\phi_{\star}(q_1)=
\phi_{\star}(q_2)|_{\phi_{\star}(q)}$ in $\mathcal{W}^{\star}(X)$.
\mlabel{co:stphi}
\end{coro}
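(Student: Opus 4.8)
The plan is to reduce the stated equivalence to the single substitution‑compatibility identity
\[
\phs(q_2|_q)\;=\;\phs(q_2)\big|_{\phs(q)},
\]
and then to prove this identity. Granting it, the corollary is immediate: if $q_1=q_2|_q$, applying $\phs$ and the identity gives $\phs(q_1)=\phs(q_2)|_{\phs(q)}$; conversely, if $\phs(q_1)=\phs(q_2)|_{\phs(q)}=\phs(q_2|_q)$, then $q_1=q_2|_q$ because $\phs$ is a bijection by Proposition~\mref{prop:sbij}.

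For the identity, the key point is that $q_2|_q$ substitutes a $\star$-bracketed word into a $\star$-bracketed word, so it is not literally an instance of Proposition~\mref{prop:bmrel}\,(\mref{item:bmphi}), whose substituted entries lie in $\frakM(X)$. I would get around this by regarding the symbol $\star$ occurring in $q,q_1,q_2$ as an ordinary letter and working inside $\frakM(X^{\star})$. By Corollary~\mref{co:sbm}, $\phi_{X^{\star}}\colon\frakM(X^{\star})\to\calw(X^{\star})$ is an isomorphism of operated monoids whose restriction to $\frakM^{\star}(X)$ is $\phs$. I would then introduce the evaluation homomorphism $e_q\colon\frakM(X^{\star})\to\frakM(X^{\star})$ of operated monoids determined by $x\mapsto x$ for $x\in X$ and $\star\mapsto q$; by the universal property it is the unique such morphism, and it realizes the substitution, i.e.\ $e_q(q_2)=q_2|_q$. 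Likewise, writing $r:=\phs(q)\in\calw^{\star}(X)$, let $e'_r\colon\calw(X^{\star})\to\calw(X^{\star})$ be the operated monoid homomorphism with $x\mapsto x$ and $\star\mapsto r$, so that $e'_r(w)=w|_r$ for every $w\in\calw^{\star}(X)$.

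Now both composites $\phi_{X^{\star}}\circ e_q$ and $e'_r\circ\phi_{X^{\star}}$ are operated monoid homomorphisms $\frakM(X^{\star})\to\calw(X^{\star})$, and on the generating set $X^{\star}$ they agree: each fixes $X$ pointwise and sends $\star$ to $r=\phs(q)=\phi_{X^{\star}}(q)$. By the uniqueness clause in the universal property of the free operated monoid $\frakM(X^{\star})$ (Lemma~\mref{pp:freetm} with $X$ replaced by $X^{\star}$), the two composites coincide. Evaluating at $q_2\in\frakM^{\star}(X)$, and using $\phi_{X^{\star}}(q_2)=\phs(q_2)\in\calw^{\star}(X)$, yields
\[
\phs(q_2|_q)=\phi_{X^{\star}}\big(e_q(q_2)\big)=e'_r\big(\phi_{X^{\star}}(q_2)\big)=\phs(q_2)\big|_{\phs(q)},
\]
which is the desired identity.

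I expect the only genuine subtlety to be the one flagged above: disentangling the two roles of $\star$ — as the hole being filled in $q_2$ and as a genuine letter inside $q$ — which is handled by passing to the enlarged alphabet $X^{\star}$. An essentially equivalent route would be to repeat verbatim the depth induction of Claim~\mref{cl:bmphi}, now allowing the substituted element to contain a single hole; no new phenomenon arises since that argument uses only the multiplicativity and bracket‑compatibility of $\phi$. One could also deduce the result from Corollary~\mref{coro:eqrel} applied over the base set $X^{\star}$, taking $f=q_1$, $s=q$, and a fresh hole symbol to encode $q_2$.
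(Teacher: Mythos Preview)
Your proof is correct. Both you and the paper address the same subtlety—that $q_2|_q$ substitutes a $\star$-word into a $\star$-word, which is not literally covered by Proposition~\mref{prop:bmrel}—by passing to the enlarged alphabet $X\cup\{\star\}$. The technical route differs, however. The paper renames the hole of $q_2$ to a fresh symbol $\star_1$, setting $q_2':=q_2|_{\star\to\star_1}\in\frakM^{\star_1}(X')$ with $X'=X\cup\{\star\}$, so that $q_2|_q=q_2'|_q$ becomes an ordinary substitution of $q\in\frakM(X')$ into a $\star_1$-word over $X'$; it then applies Proposition~\mref{prop:bmrel}\,(\mref{item:bmphi}) over $X'$ with $k=1$ for the forward direction and Proposition~\mref{prop:bmrel}\,(\mref{item:mbphi}) for the converse. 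Your argument via the evaluation morphisms $e_q$, $e'_r$ and the uniqueness clause of Lemma~\mref{pp:freetm} is more streamlined: no auxiliary hole symbol is introduced, and both directions follow at once from the single identity $\phs(q_2|_q)=\phs(q_2)|_{\phs(q)}$ together with the bijectivity of $\phs$. The paper's approach has the modest advantage of reusing Proposition~\mref{prop:bmrel} verbatim rather than appealing to the universal property a second time.
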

\begin{proof}
To be precise, denote the isomorphism and the bijection in Proposition~\mref{prop:bmrel}, in the case of $k=1$, by
$$\phi:=\phi_X:\frakM(X)\rightarrow \mathcal{W}(X), \quad \phi_{\star_1}:=\phi_{X,\star_1}: \frakM^{\star_1}(X)\rightarrow \mathcal{W}^{\star_1}(X).$$
Also denote $X':=X\cup \{\star\}$ where $\star$ is a symbol not in $X\cup\{\star_1\}$. Then
$\phi_{X'}|_{\frakM^{\star}(X)}=\phi_{X,\star}$.

Let $q_1, q_2, q\in \frakM^\star(X)$ be given. Define $q_2':=q_2|_{\star\to \star_1}\in \frakM^{\star_1}(X) \subseteq \frakM^{\star_1}(X')$. We also have
$q_1, q\in \frakM^\star(X)\subseteq \frakM(X')$.
Then $q_1=q_2|_q$ means $q_1=q_2'|_q$ in $\frakM(X')$. Then applying Proposition~\mref{prop:bmrel}.(\mref{item:bmphi}) to the set $X'$ with $k=1$, we have

$$\phi_\star(q_1)=\phi_{X,\star}(q_1)=\phi_{X'}(q_1) =\phi_{X'}(q_2'|_q) =\phi_{X',\star_1}(q_2')|_{\phi_{X'}(q)} =\phi_{\star_1}(q_2')|_{\phi_\star(q)}.$$
But $q_2'|_\star =q_2$, so $\phi_{\star_1}(q_2')|_{\phi_\star(q)} =\phi_{\star}(q_2)|_{\phi_\star(q)}.$
Hence
$$\phi_\star(q_1) =\phi_{\star}(q_2)|_{\phi_{\star}(q)}.$$

Conversely, if $\phi_\star(q_1)=\phi_\star(q_2)|_{\phi_\star(q)}$, then we have $\phi_\star(q_1)=\phi_{\star_1}(q_2')|_{\phi_\star(q)}$, where $q_2'=q_2|_{\star_1}.$
Then with the notations as above, applying Proposition~\mref{prop:bmrel}.(\mref{item:mbphi}), we have

$$ q_1 = \phi^{-1}_{\star}(\phi_\star(q_1)) = \phi^{-1}_{\star}(\phi_{\star_1}(q'_2)|_{\phi(q)}) =\phi^{-1}_{\star_1}(\phi_{\star_1}(q'_2))|_{\phi_\star^{-1}(\phi_\star(q))} =q'_2|_q=q_2|_q.$$
\end{proof}

\begin{theorem}
Let $f$ be a bracketed word in $\frakM(X)$. Let $(s_1,q_1)$ and $(s_2,q_2)$ be  \plas in $f$. Then
\begin{enumerate}
\item
the pairs $(s_1,q_1)$ and $(s_2,q_2)$ are separated in $f$ if and only if the pairs $(\phi(s_1), \phi_{\star}(q_1))$ and $(\phi(s_2),\phi_{\star}(q_2))$ are separated in $\phi(f)$;
\mlabel{item:sep}
\item
the pairs $(s_1,q_1)$ and $(s_2,q_2)$ are nested in $f$ if and only if the pairs $(\phi(s_1),\phi_{\star}(q_1))$ and $(\phi(s_2),\phi_{\star}(q_2))$ are nested  in $\phi(f)$;
\mlabel{item:nes}
\item
the pairs $(s_1,q_1)$ and $(s_2,q_2)$ are intersecting in $f$ if  and  only if the pairs $(\phi(s_1),\phi_{\star}(q_1))$ and $(\phi(s_2),\phi_{\star}(q_2))$ are intersecting in $\phi(f)$.
\mlabel{item:int}
 \end{enumerate}
\mlabel{thm:main2}
\end{theorem}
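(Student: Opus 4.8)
The plan is to reduce each of the three equivalences to the corresponding definitions in Definition~\mref{defn:bwrel} and Definition~\mref{defn:smm}, and then to translate each defining equation across the bijection $\phi$ using Proposition~\mref{prop:bmrel}, Corollary~\mref{coro:eqrel} and Corollary~\mref{co:stphi}. The common mechanism is that $\phi$ (and its $\vs$-versions $\phi_{\vs}$) are bijections that convert every substitution identity of bracketed words into the corresponding substitution identity of Motzkin words and back; so each "there exists $q$ (resp. $q$ and $a,b,c$) such that $\ldots$" clause is transported verbatim, once we check that the quantified witnesses also correspond under the bijections.

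First I would prove the nested case~(\mref{item:nes}), since it is the cleanest. By Definition~\mref{defn:bwrel}(\mref{item:bnes}), $(s_1,q_1)$ and $(s_2,q_2)$ are nested in $f$ iff there is $q\in\frakM^\star(X)$ with $q_1=q_2|_q$ or $q_2=q_1|_q$. By Corollary~\mref{co:stphi}, $q_1=q_2|_q$ holds iff $\phi_\star(q_1)=\phi_\star(q_2)|_{\phi_\star(q)}$, and symmetrically for the other orientation. Since $\phi_\star:\frakM^\star(X)\to\calw^\star(X)$ is a bijection (Proposition~\mref{prop:sbij}), the witness $q$ exists on the bracketed side iff the witness $\phi_\star(q)$ exists on the Motzkin side; combined with Corollary~\mref{coro:eqrel} (which identifies $(\phi(s_i),\phi_\star(q_i))$ as genuine \plas in $\phi(f)$), this gives the nested equivalence directly from the definition applied with $Z=X\cup\{\lm,\rtm\}$.

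Next I would handle the separated case~(\mref{item:sep}). Here the witness is an element $q\in\frakM^{\star_1,\star_2}(X)$ satisfying $f=q|_{s_1,s_2}$, $q_1|_{\star_1}=q|_{\star_1,s_2}$ and $q_2|_{\star_2}=q|_{s_1,\star_2}$. Apply Proposition~\mref{prop:bmrel}(\mref{item:bmphi}) with $k=2$ to each of these three identities: $\phi(f)=\phi_{\star_1,\star_2}(q)|_{\phi(s_1),\phi(s_2)}$, and likewise the other two become identities among $\phi_\star(q_1)$, $\phi_\star(q_2)$, $\phi_{\star_1,\star_2}(q)$, $\phi(s_1)$, $\phi(s_2)$ (here using that restricting $\phi_{\star_1,\star_2}$ to a single-$\star$ specialization recovers $\phi_\star$, exactly as in the proof of Corollary~\mref{co:stphi}). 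Since $\phi_{\star_1,\star_2}$ is a bijection onto $\calw^{\star_1,\star_2}(X)\subseteq M^{\star_1,\star_2}(X\cup\{\lm,\rtm\})$, the bracketed witness $q$ exists iff the word-level witness $\phi_{\star_1,\star_2}(q)$ exists; and for the converse direction I would start with a separating $p$ on the Motzkin side and pull it back via $\phi_{\star_1,\star_2}^{-1}$, using Proposition~\mref{prop:bmrel}(\mref{item:mbphi}) and Lemma~\mref{lem:msub} to see the images land in the right sets. The intersecting case~(\mref{item:int}) is analogous: the witnesses are $q\in\frakM^\star(X)$ and $a,b,c\in\frakM(X)\setminus\{1\}$; applying Proposition~\mref{prop:bmrel} transports $f=q|_{abc}$, $q_1=q|_{\star c}$, $q_2=q|_{a\star}$ to the Motzkin identities for $\phi(f)$, $\phi_\star(q_1)$, $\phi_\star(q_2)$ with witnesses $\phi_\star(q)$ and $\phi(a),\phi(b),\phi(c)$, and one checks $\phi(a),\phi(b),\phi(c)\neq 1$ since $\phi$ is a bijection fixing $1$; the converse pulls back a word-level intersecting datum through $\phi^{-1}$ and $\phi_\star^{-1}$.

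The main obstacle is bookkeeping rather than mathematics: I must be careful that the $\vs$-substitution $q|_{\star c}$, $q|_{a\star}$, $q|_{abc}$ appearing in the intersecting definition are instances of the $\star$-substitution to which Proposition~\mref{prop:bmrel}(\mref{item:bmphi}) applies — i.e.\ treat $q|_{\star c}$ as $q|_{s}$ with $s=\star c\in\frakM^\star(X)$ regarded via the evident inclusions, and likewise reconcile the one-variable $\phi_\star$ with the two-variable $\phi_{\star_1,\star_2}$ in the separated case exactly as Corollary~\mref{co:stphi} does for $k=1$. Once these identifications are set up, each of the three "iff" statements follows by a single pass through Proposition~\mref{prop:bmrel} in one direction and its inverse-form companion in the other, with Corollary~\mref{coro:eqrel} ensuring throughout that the transported pairs are still \plas.
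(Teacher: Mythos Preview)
Your proposal is correct and follows essentially the same approach as the paper: both translate each of the three defining clauses across $\phi$ via Proposition~\mref{prop:bmrel} and Corollary~\mref{co:stphi}, and use the bijectivity of $\phi_\star$ and $\phi_{\star_1,\star_2}$ to pull back witnesses in the converse direction. The only cosmetic differences are the order in which you treat the three cases and your explicit flagging of the bookkeeping needed to reconcile one- and two-variable substitutions, which the paper handles with the same tools but less commentary; your mention of Lemma~\mref{lem:msub} is harmless but not actually needed here.
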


\begin{proof}
Let $(s_1,q_1)$ and $(s_2,q_2)$ be  \plas in $f$.
Then by Corollary~\mref{coro:eqrel}, $(\phi(s_1),\phi_{\star}(q_1))$ and $(\phi(s_2),\phi_{\star}(q_2))$ are \plas in $\phi(f)$.
\smallskip

(\mref{item:sep})
The \plas $(s_1,q_1)$ and  $(s_2,q_2)$ in $f$ are separated if and only if there exists an element $q$ in $\frakM^{\star_1,\star_2}(X)$ such that $$q_1|_{\star_1}=q|_{\star_1,\ s_2}, \quad q_2|_{\star_2}=q|_{s_1,\ \star_2}, \quad f=q|_{s_1,s_2}.$$
By Proposition~\mref{prop:bmrel}, this is so if and only if
\begin{equation}
\phi_{\star}(q_1)|_{\star_1} =\phi_{\star_1,\star_2}(q)|_{\star_1,\ \phi(s_2)}, \quad \phi_{\star}(q_2)|_{\star_2} =\phi_{\star_1,\star_2}(q)|_{\phi(s_1),\ \star_2}, \quad \phi(f)=\phi_{\star_1,\star_2}(q)|_{\phi(s_1),\phi(s_2)}.
\mlabel{eq:sep1}
\end{equation}
If this is true, then $(\phi(s_1),\phi_{\star}(q_1))$ and $(\phi(s_2),\phi_{\star}(q_2))$ are separated in $\phi(f)$.
Conversely, if the \plas $(\phi(s_1),\phi_{\star}(q_1))$ and $(\phi(s_2),\phi_{\star}(q_2))$ are separated in $\phi(f)$, then there exists an element $p$ in $\calw^{\star_1,\star_2}(X)$ such that
$$\phi_{\star}(q_1)|_{\star_1}=p|_{\star_1,\ \phi(s_2)}, \quad \phi_{\star}(q_2)|_{\star_2}=p|_{\phi(s_1),\ \star_2}, \quad \phi(f)=p|_{\phi(s_1),\phi(s_2)}.$$
Since $\phi_{\star_1,\star_2}:\frakM^{\star_1,\star_2}(X)\to \calw^{\star_1,\star_2}(X)$ is bijective, there is $q\in \frakM^{\star_1,\star_2}(X)$ such that $\phi_{\star_1,\star_2}(q)=p$. Thus Eq.~(\mref{eq:sep1}) holds and hence $(s_1,q_1)$ and  $(s_2,q_2)$ are separated in $f$.
\smallskip

\noindent
(\mref{item:nes})
The \plas $(s_1,q_1)$ and $(s_2,q_2)$ are nested in $f$ if and only if there exists an element $q$ in $\frakM^{\star}(X)$ such that $q_1=q_2|_q$ or $q_2=q_1|_q$. By Corollary~\mref{co:stphi}, this holds if and only if
\begin{equation}
\phi_\star(q_1)=\phi_{\star}(q_2)|_{\phi_{\star}(q)} \quad \text{ or } \quad \phi_{\star}(q_2)=\phi_{\star}(q_1)|_{\phi_{\star}(q)}.
\mlabel{eq:nes1}
\end{equation}
If this is true, then $(\phi(s_1),\phi_\star(q_1))$ and $(\phi(s_2),\phi_\star(q_2))$ are nested. Conversely, if
$(\phi(s_1),\phi_\star(q_1))$ and $(\phi(s_2),\phi_\star(q_2))$ are nested, then there is $p\in \calw^{\star}(X)$ such that
$$\phi_\star(q_1)=\phi_{\star}(q_2)|_{p} \quad \text{ or } \quad \phi_{\star}(q_2)=\phi_{\star}(q_1)|_{p}.$$
Since $\phi_{\star}:\frakM^{\star}(X)\to \calw^{\star}(X)$ is bijective, there is $q\in \frakM^{\star}(X)$ such that $\phi_{\star}(q)=p$. Thus Eq.~(\mref{eq:nes1}) holds and hence $(s_1,q_1)$ and  $(s_2,q_2)$ are nested in $f$.

\smallskip
\noindent
(\mref{item:int})
The \plas $(s_1,q_1)$ and $(s_2,q_2)$ are intersecting in $f$ if and only if there exist $q$ in $\frakM^{\star}(X)$ and $a,b,c$ in $\frakM(X)\backslash\{1\}$ such that
$$f=q|_{abc}, \quad q_1=q|_{\star c}, \quad q_2=q|_{a\star},$$
or
$$f=q|_{abc}, \quad q_1=q|_{a\star}, \quad q_2=q|_{\star c}.$$
By Theorem~\mref{thm:mbiso}, Proposition~\mref{prop:bmrel} and Corollary~\mref{co:stphi}, this is so if and only if
\begin{equation}
\phi(f)=\phi_\star(q)|_{\phi(a)\phi(b)\phi(c)}, \quad \phi_\star(q_1)=\phi_\star(q)|_{\phi(a)\star},\quad \phi_\star(q_2)=\phi_\star(q)|_{\star \phi(c)},
\mlabel{eq:int1}
\end{equation}
or
\begin{equation}
\phi(f)=\phi_\star(q)|_{\phi(a)\phi(b)\phi(c)}, \quad \phi_\star(q_1)=\phi_\star(q)|_{\star\phi(c)},\quad \phi_\star(q_2)=\phi_\star(q)|_{\phi(a)\star}.
\mlabel{eq:int2}
\end{equation}
If this is true, then $(\phi(s_1),\phi_\star(q_1))$ and $(\phi(s_2),\phi_\star(q_2))$ are intersecting since $\phi(a), \phi(b), \phi(c)\neq 1$.

Conversely, if $(\phi(s_1),\phi_\star(q_1))$ and $(\phi(s_2),\phi_\star(q_2))$ are intersecting in $\phi(f)$, then there exist $p$ in $\calw^{\star}(X)$ and $\alpha, \beta, \gamma$ in $\calw(X)\backslash\{1\}$ such that
$$\phi(f)=p|_{\alpha\beta\gamma}, \quad \phi_\star(q_1)=\phi_\star(q)|_{\alpha\star}, \quad \phi_\star(q_2)=\phi_\star(q)|_{\star\gamma},$$
or
$$\phi(f)=p|_{\alpha\beta\gamma}, \quad \phi_\star(q_1)=\phi_\star(q)|_{\star\gamma}, \quad \phi_\star(q_2)=\phi_\star(q)|_{\alpha\star}.$$
By the bijectivity of $\phi$ and $\phi_\star$, there are $q\in \frakM^{\star}(X)$ and $a, b, c\in \frakM(X)\backslash\{1\}$ such that $p=\phi_\star(q)$ and $\alpha=\phi(a), \beta=\phi(b), \gamma=\phi(c)$. Then Eqs.~(\mref{eq:int1}) or (\mref{eq:int2}) hold. This shows that $(s_1,q_1)$ and $(s_2,q_2)$ are intersecting in $f$.
\end{proof}

\subsection{Relative locations of bracketed subwords}
\mlabel{subsec:pla}

Now we are ready to prove our main theorem on the classification of relative locations of two bracketed subwords (\plas) in a bracketed word.

\begin{theorem} {\bf $($Main Theorem$)$}
Let $f$ be a bracketed word in $\frakM(X)$. For any two \plas $(s_1, q_1)$ and $(s_2,q_2)$ in $f$, exactly one of the following is true:
\begin{enumerate}
\item
$(s_1, q_1)$ and $(s_2,q_2)$ are separated;
\item
$(s_1, q_1)$ and $(s_2,q_2)$ are nested;
\item
$(s_1, q_1)$ and $(s_2,q_2)$ are intersecting.
\end{enumerate}
\end{theorem}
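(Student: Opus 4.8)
The plan is to push the whole question through the isomorphism $\phi$ of Theorem~\mref{thm:mbiso} into the world of Motzkin words, and then quote the word version already established in Theorem~\mref{thm:relw}. First I would note that, by Corollary~\mref{coro:eqrel}, the pairs $(\phi(s_1),\phi_{\star}(q_1))$ and $(\phi(s_2),\phi_{\star}(q_2))$ are \plas of Motzkin subwords of the Motzkin word $\phi(f)\in\calw(X)$. Next, Theorem~\mref{thm:main2} says that $(s_1,q_1)$ and $(s_2,q_2)$ are separated (resp.\ nested, resp.\ intersecting) in $f$ if and only if $(\phi(s_1),\phi_{\star}(q_1))$ and $(\phi(s_2),\phi_{\star}(q_2))$ are separated (resp.\ nested, resp.\ intersecting) in $\phi(f)$. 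So it suffices to prove that exactly one of the three alternatives holds for the latter pair of \plas inside $\phi(f)$, and then transport the conclusion back along Theorem~\mref{thm:main2}.

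For this reduced statement I would use that $\calw(X)\subseteq M(Z)$ and $\calw^{\star}(X)\subseteq M^{\star}(Z)$ for the alphabet $Z:=X\cup\{\lm,\rtm\}$, so that $(\phi(s_i),\phi_{\star}(q_i))$ are in particular \plas of ordinary subwords of the ordinary word $\phi(f)\in M(Z)$. Since the relative-location notions for Motzkin \plas were introduced precisely as the special case $Z=X\cup\{\lm,\rtm\}$ of Definition~\mref{defn:smm}, they agree with the corresponding notions for ordinary \plas over $Z$; hence Theorem~\mref{thm:relw}, applied to the word $\phi(f)$ over $Z$ and its two \plas, gives directly that exactly one of ``separated'', ``nested'', ``intersecting'' holds. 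Combined with Theorem~\mref{thm:main2}, this is the Main Theorem.

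The only point that requires genuine (if routine) attention is the claim that the Motzkin-level notions really do coincide with the $Z$-word notions: when Theorem~\mref{thm:relw} puts the two \plas in, say, the intersecting case, the witnessing data $p\in M^{\star}(Z)$ and $a,b,c\in S(Z)$ it produces should automatically be a $\star$-Motzkin word and Motzkin words (and similarly for the separated and nested cases), so that the bracketed-level definition is literally met. I would record this as a small lemma, the Motzkin counterpart of Lemma~\mref{lem:msub} for ``quotients'' instead of substitutions: if $w,u\in\calw(X)$ and $p\in M^{\star}(Z)$ with $p|_u=w$, then $p\in\calw^{\star}(X)$; likewise for the blocks $a,b,c$ cut out of $\phi(f)$. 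Its proof is a one-line bracket count — deleting a balanced sub-block from a balanced word leaves the equal-count condition and the prefix-dominance condition of Definition~\mref{defn:mword} intact. I expect this bookkeeping, rather than the transport itself, to be the only mildly delicate step; the substance of the theorem is already contained in Theorems~\mref{thm:main2} and \mref{thm:relw}.
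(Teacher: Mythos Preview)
Your approach is correct and matches the paper's proof: transfer the two \plas to $\phi(f)\in\calw(X)\subseteq M(X\cup\{\lm,\rtm\})$ via Corollary~\mref{coro:eqrel}, apply the word trichotomy Theorem~\mref{thm:relw} over $Z=X\cup\{\lm,\rtm\}$, and pull the conclusion back through Theorem~\mref{thm:main2}. The bookkeeping lemma you single out (that the witnesses produced over $Z$ are automatically Motzkin) is a point the paper sidesteps by declaring the Motzkin relative-location notions to be literally the $Z$-word instance of Definition~\mref{defn:smm}; the corresponding bracket-count verification is instead absorbed into the proof of Theorem~\mref{thm:main2}, where the witnesses are asserted to lie in $\calw^{\star}(X)$ and $\calw^{\star_1,\star_2}(X)$.
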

\begin{proof}
By Theorem~\mref{thm:relw}, the statement of the theorem holds when  $(s_1,q_1)$ and $(s_2,q_2)$ are replaced by the two \plas $(\phi(s_1),\phi_\star(q_1))$ and $(\phi(s_2),\phi_\star(q_2))$ in the word $\phi(f)\in \calw(X)\subseteq M(X\cup\{\lm,\rtm\})$. Then by Theorem~\mref{thm:main2}, the statement holds for $(s_1,q_1)$ and $(s_2,q_2)$.
\end{proof}

\noindent {\bf Acknowledgements}: L.~Guo acknowledges support from NSF grant DMS 1001855. S. Zheng thanks support from NSFC grant 11201201 and Fundamental
Research Funds for the Central Universities lzujbky-2013-8.


\end{document}